\documentclass[12pt,letterpaper,reqno]{amsart}

\usepackage{mathptmx}
\usepackage{amsmath}
\usepackage{amssymb}
\usepackage{mathtools}
\usepackage{mathrsfs}
\usepackage{bm}
\usepackage{enumitem}
\usepackage{microtype}
\usepackage{xcolor}
\usepackage[
colorlinks=true,
linkcolor=red,
citecolor=blue,
urlcolor=blue,
pdfencoding=auto,
psdextra
]{hyperref}
\usepackage[nameinlink,noabbrev,capitalise]{cleveref}

\newcommand{\Z}{\mathbb Z}
\newcommand{\cA}{\mathcal A}
\newcommand{\cB}{\mathcal B}
\newcommand{\cF}{\mathcal F}
\newcommand{\cP}{\mathcal P}
\newcommand{\cR}{\mathcal R}
\newcommand{\cU}{\mathcal U}
\newcommand{\join}{\bigvee}
\newcommand{\mesh}{\operatorname{mesh}}
\newcommand{\htop}{h_{\mathrm{top}}}
\newcommand{\Nname}{\mathsf N}
\newcommand{\Hb}{H_{\mathrm b}}

\DeclareMathOperator{\diam}{diam}

\newtheorem{theorem}{Theorem}[section]
\newtheorem{lemma}[theorem]{Lemma}
\newtheorem{proposition}[theorem]{Proposition}
\newtheorem{corollary}[theorem]{Corollary}
\newtheorem{question}[theorem]{Question}
\newtheorem{conjecture}[theorem]{Conjecture}

\theoremstyle{definition}
\newtheorem{definition}[theorem]{Definition}

\theoremstyle{remark}
\newtheorem{remark}[theorem]{Remark}

\numberwithin{equation}{section}
\setlist[enumerate,1]{label=\textup{(\roman*)},leftmargin=*,itemsep=2pt}
\setlist[itemize]{leftmargin=*,itemsep=2pt}
\allowdisplaybreaks[3]
\title[Hereditary Lowerability]{Hereditary Lowerability of Topological Dynamical Systems}

\author[X. Wang]{Xiaochen Wang}
\address{School of Mathematical Sciences, University of Science and Technology of China, Hefei, 230026, Anhui, People’s Republic of China}
\email{xcwang97@ustc.edu.cn}

\subjclass[2020]{Primary 37B40; Secondary 37A35, 37B10}
\keywords{topological entropy, lowerable, hereditarily lowerable}
\date{}

\hypersetup{
	pdftitle={Answers to a Question and a Conjecture on Lowering Topological Entropy},
	pdfauthor={Author Name},
	pdfsubject={Topological and measure-theoretic entropy},
	pdfkeywords={topological entropy, lowerable, hereditarily lowerable}
}

\begin{document}
	
	\begin{abstract}
		Let $(X,T)$ be a topological dynamical system and let $h(T,K)$ denote the topological
		entropy of a compact set $K\subset X$. We settle a question and a
		conjecture raised by Huang, Ye, and Zhang concerning hereditary lowerability in \cite{HYZ2014}.
		First, we show that every system with finite topological entropy is hereditarily lowerable:
		for every nonempty compact set $K\subset X$ and every
		$0\leq h\leq h(T,K)$, there is a compact set $K_h\subset K$ such that
		$h(T,K_h)=h$. This gives a negative answer to their Question~$2'$.
		Second, we prove that if $(X,T)$ admits an ergodic invariant measure with infinite entropy, then $(X,T)$ is not hereditarily lowerable. More precisely, we construct a compact set $K$ with infinite entropy such that every compact subset
		of $K$ has entropy either zero or infinity. This proves the conjecture stated
		immediately after Question~$2'$.
	\end{abstract}
	
	\maketitle
	\pagestyle{headings}
	\markboth{}{}
	
	\section{Introduction}
	\label{sec:introduction}
	
	Let $(X,T)$ be a topological dynamical system, by which we mean a compact
	metric space $X$ together with a homeomorphism $T\colon X\to X$. For a compact
	set $K\subset X$, write $h(T,K)$ for its topological entropy. The problem of
	lowering entropy has a substantial history; see, for example,
	\cite{ShubWeiss1991,Lindenstrauss1995}. 
	Its classical form concerns factors rather than subsets. 
	In analogy with the measure-theoretic fact that positive entropy can often be reduced by passing to suitable factors, Shub and Weiss asked whether a topological dynamical system of positive entropy must admit factors of smaller prescribed entropy \cite{ShubWeiss1991}. 
	They proved this in the uniquely ergodic case, but also showed that the general topological problem is delicate by constructing an infinite-entropy system for which every nontrivial factor has infinite entropy. Lindenstrauss later gave an affirmative answer for finite-dimensional systems \cite{Lindenstrauss1995}. 
	In a subsequent development, using mean topological dimension and the small boundary property, he proved an affirmative result for extensions of nontrivial minimal systems with zero mean topological dimension \cite{Lindenstrauss1999}; see also \cite{LindenstraussWeiss2000} for the theory of mean topological dimension.
	
	A different but closely related viewpoint is to lower entropy not by passing to factors, but by passing to subsets. This perspective is partly motivated by dimension theory: for a Borel subset of Euclidean space, intermediate Hausdorff dimensions can be realized by suitable subsets \cite{Falconer1990,Mattila1995}. In topological dynamics, Ye and Zhang's theory of entropy points showed that the entropy of a compact set can be captured by countable compact subsets with controlled accumulation structure \cite{YeZhang2007}. This made it natural to ask whether, for every value between zero and the entropy of a system or a compact set, one can find a compact subset realizing exactly that value. It is important to note that this subset problem is genuinely different from the factor problem.
	Motivated by the problem of realizing intermediate entropy values over subsets, Huang, Ye, and Zhang introduced the notions of lowerability, hereditary lowerability and hereditary uniform lowerability in \cite{HYZ2010}. They proved, among other things, that every system with finite topological entropy is lowerable and characterized the stronger hereditary uniform lowerability by asymptotic $h$-expansiveness.
	
	In their subsequent paper \cite{HYZ2014}, the same authors proved that every  finite-entropy system is D-lowerable and they also constructed systems that are lowerable but not hereditarily lowerable. Their examples have infinite topological entropy, so the remaining finite-entropy problem was formulated as follows.

	\begin{question}[{\cite[p.~4430]{HYZ2014}, Question $2'$}]
		\label{ques:finite}
		Is there a topological dynamical system with finite entropy which is not
		hereditarily lowerable?
	\end{question}
	
	Immediately after this question, they proposed the following obstruction in
	the infinite-entropy setting.
	
	\begin{conjecture}[{\cite[p.~4430]{HYZ2014}}]
		\label{conj:infinite}
		If a topological dynamical system admits an ergodic invariant probability
		measure with infinite entropy, then it is not hereditarily lowerable.
	\end{conjecture}
	
	Our two main results answer \cref{ques:finite} and prove
	\cref{conj:infinite}.
	
	\begin{theorem}
		\label{thm:finite-main}
		If $\htop(T,X)<+\infty$, then $(X,T)$ is hereditarily lowerable.
	\end{theorem}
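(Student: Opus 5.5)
Fix a nonempty compact $K\subset X$ and $0\le h\le h(T,K)<\infty$. Since $h(T,\cdot)$ is monotone, we only need $K_h\subset K$ compact with $h(T,K_h)=h$. The endpoints are trivial: $K_h=K$ for $h=h(T,K)$, and a single point for $h=0$. So assume $0<h<h(T,K)$. The plan is a Cantor-type pruning of $K$ along a sequence of finite open covers $\cU_1\prec\cU_2\prec\cdots$ with $\mesh(\cU_k)\to0$. At stage $k$ we keep a compact set $K_k\subset K_{k-1}$ and set $K_h=\bigcap_k K_k$. The pruning is designed so that the number of $(n,\cU_k)$-cells needed to cover $K_h$ grows like $e^{nh}$ on a prescribed sequence of times.

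First I would reduce the problem to a single cover at a time using the Bowen definition $h(T,K)=\sup_{\cU}\lim_n \frac1n\log \Nname(\join_{i=0}^{n-1}T^{-i}\cU,K)$. The basic lemma is the following. For a compact $L$ with $h(T,L)>h'$, a finite cover $\cU$, and large $n$, one can choose closed pieces $L\cap \overline{U_{i_0}}\cap\cdots\cap T^{-(n-1)}\overline{U_{i_{n-1}}}$ whose union $L'$ satisfies two conditions. It needs roughly $e^{nh'}$ cells of $\join_0^{n-1}T^{-i}\cU$ to cover, and it still has entropy at least $h'$ "inside every piece". This step is where finite entropy enters. Because $h(T,L)<\infty$, a local variational principle or entropy-point argument (Ye--Zhang type: entropy is carried by small pieces, and $h(T,L)=\max$ over the pieces of a finite closed cover) lets us pass to a sub-collection of pieces each carrying entropy $>h'$. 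Counting the pieces is exact up to a subexponential error, because $\Nname$ is submultiplicative.

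Next, iterate with times $n_1\ll n_2\ll\cdots$ and targets $h'_k\downarrow h$. At stage $k$, apply the lemma inside each piece of $K_{k-1}$ with the finer cover $\cU_k$ at time $n_k$. We keep only about $e^{(n_k-n_{k-1})h}$ children per piece, chosen to be spread along the orbit block of length $n_k$. This yields two bounds. The \emph{upper bound} $h(T,K_h)\le h$ holds because, for a fixed $\cU_j$, covering $K_h$ at time $n_k$ ($k\ge j$) needs at most $e^{n_kh+o(n_k)}$ cells. Intermediate times are handled by requiring $n_{k+1}/n_k\to\infty$ together with submultiplicativity and a uniform bound $|\cU_j|^{\,n_{k+1}-n_k\cdot m}$ on the pieces that are not controlled. \emph{This is the delicate part.} I would instead keep pieces uniformly distributed at every scale by repeating the selection in blocks of length $n_{k-1}$, as in a concatenation construction. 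The \emph{lower bound} $h(T,K_h)\ge h$ holds because distinct surviving pieces at stage $k$ are separated in the $\cU_k$-Bowen sense, via Lebesgue numbers. This gives $\Nname(\cdot,K_h)\ge e^{n_kh-o(n_k)}$ along the sequence $n_k$, and for Bowen entropy of compact sets the liminf versus limsup issue is handled by using the spread-out block structure.

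I expect the main obstacle to be the upper bound at all times $n$, not only at $n=n_k$, together with the passage to the limit $K_h=\bigcap K_k$. Covers of $K_h$ must not see more than $e^{nh}$ cells at any scale, even though $K_{k-1}\supset K_h$ had larger entropy. The remedy I would try first is the block-product selection: choose children whose itineraries on consecutive $n_{k-1}$-blocks form a product set of controlled size. Finite entropy guarantees that the ambient count at each scale is at most $e^{n(h(T,K)+\epsilon)}$, so the error terms remain subexponential. This is exactly what fails when $h(T,K)=\infty$.
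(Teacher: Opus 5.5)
There is a genuine gap at the foundation of your plan: the ``basic lemma'' is false, and your appeal to finite entropy does not rescue it. The identity $h(T,L)=\max_i h(T,L\cap F_i)$ for a finite closed cover gives \emph{one} piece of large entropy, not about $e^{nh'}$ of them. Finiteness of $h(T,L)$ does nothing to spread entropy out, because the entropy of a compact set can be concentrated at a single point.

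For instance, in the two-sided full $2$-shift let $C$ consist of the all-zero point $x_*$ together with, for every $m\ge1$ and $w\in\{0,1\}^m$, the point that equals $w$ on coordinates $m,\ldots,2m-1$ and $0$ elsewhere. Then $C$ is countable and compact with unique accumulation point $x_*$. For the time-zero partition $\alpha$ one has $\Nname_\alpha(2m,C)\ge 2^m$, so $h(T,C)\ge\frac12\log 2$. By \cref{prop:local-entropy}(ii), such sets occur inside every compact set having a point of positive local entropy.

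Your lower bound needs the surviving pieces to be pairwise separated, so at most one of them contains $x_*$. Every other piece is a closed subset of $C$ missing $x_*$, hence finite. It therefore has zero entropy and cannot keep branching into $e^{(n_k-n_{k-1})h}$ children at every later stage. So for $K=C$, which hereditary lowerability obliges you to handle, neither your lemma nor the branching recursion exists. Separately, $\Nname_\alpha$ is not submultiplicative for non-invariant sets. One only has $\Nname_\alpha(m+n,E)\le\Nname_\alpha(m,E)\,\Nname_\alpha(n,T^mE)$, which is why $h(T,K)$ is defined with a $\limsup$.

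The paper's proof is organized around exactly this obstruction. It first passes to a zero-dimensional principal extension. There names are taken with respect to clopen partitions, so distinct names are genuinely separated; closures $\overline{U_i}$ of an open cover overlap and do not give this. It then uses the relative entropy function (\cref{prop:local-entropy}) to pick $x\in K$ with $h(x,K)>h$ and a countable compact $C_x\subset K$ with unique accumulation point $x$ and $h(T,C_x)>h$.

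Inside $C_x$ the set realizing $h$ is not a nested intersection. It is a union $F=\{x_*\}\cup\bigcup_sF_s$ of \emph{finite} clouds $F_s$, taken from tails that share the fine name of $x_*$ up to time $L_s$. The lower bound comes from the $M_s=\lfloor e^{r_sn_s}\rfloor$ points of $F_s$, which have distinct coarse names at time $n_s$.

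The upper bound at every length, which is the step you leave open, comes from \cref{lem:tree}. Some cyclic block of $M$ consecutive leaves in DFS order occupies at most $2\ell^2(1+MB_\ell/A)$ prefixes at level $\ell$, simultaneously for all $\ell$. The choice $r_s=hc_s/D_s$ with $D_s=H+\delta_s$ then gives $M_sB_{s,\ell}/A_s\le e^{h\ell}$ for $L_s\le\ell\le c_sn_s/D_s$, and $M_s<e^{h\ell}$ for larger $\ell$; this is where $H<\infty$ is used. Your block-product selection cannot replace this. The names realized in a non-invariant compact set cannot in general be concatenated block by block, so there is no product set of realizable itineraries to choose from.
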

	
	\begin{theorem}
		\label{thm:infinite-main}
		Suppose that there is a measure $\mu\in \mathcal{M}^e(X,T)$ such that
		$h_\mu(T)=+\infty$. Then $(X,T)$ is not hereditarily lowerable. In fact, there
		is a compact set $K\subset X$ satisfying
		\[
		h(T,K)=+\infty
		\quad\text{and}\quad
		h(T,L)\in\{0,+\infty\}
		\]
		for every compact set $L\subset K$.
	\end{theorem}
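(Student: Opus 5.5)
The plan is to build $K$ as a nested intersection of finite unions of Bowen-type "cylinder" sets, each modelled on orbit segments that are typical for $\mu$. The construction is arranged so that entropy in $K$ can only appear in a form that is automatically amplified at every finer scale.

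First, I would extract the combinatorial input from $\mu$. Since $h_\mu(T)=+\infty$ and $\mu$ is ergodic, Katok's entropy formula gives the following: for every $j\ge1$ there are $\varepsilon_j\downarrow0$ such that, for all large $n$, every set of $\mu$-measure at least $1/2$ contains at least $e^{jn}$ points that are pairwise $(n,\varepsilon_j)$-separated. Combining this with the Birkhoff and Shannon–McMillan–Breiman theorems for a refining sequence of partitions, I would fix a generic set $G$ of measure close to $1$. On $G$ these separated families exist at every scale simultaneously, and they can be chosen nested: points that are $(n,\varepsilon_j)$-separated are grouped inside clusters that are $(n,\varepsilon_{j-1}/4)$-close. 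This nesting is the "hereditary" feature I want to build in.

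Second, the construction. I would choose rapidly increasing block lengths $n_1\ll n_2\ll\cdots$, so that $n_{j+1}$ dominates $\exp(\sum_{i\le j} n_i)$, together with shadowing tolerances summable in $j$. The set $K$ is defined by successive refinement. At stage $j$, each cylinder from stage $j-1$ is replaced by $e^{j n_j}$ subcylinders. These follow, over a single time window of length $n_j$ placed far out in time, points of the stage-$j$ separated family. The family is chosen so that any two of them that differ at scale $\varepsilon_i$ (for some $i<j$) also differ in exponentially many nested ways at every scale $\varepsilon_{j'}$ with $i\le j'\le j$, with rate at least $j'$. Compactness and closeness are handled by taking closures and using continuity of $T$ on the finite windows. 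The estimate $h(T,K)=+\infty$ then follows directly, since at scale $\varepsilon_j$ the set $K$ has at least $e^{j n_j}$ points that are $(n_j+O(1),\varepsilon_j)$-separated.

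Third, the dichotomy. Let $L\subset K$ be compact with $h(T,L)>0$. Then there are a scale $\varepsilon=\varepsilon_i$, a $\delta>0$ and infinitely many $n$ for which $L$ contains $e^{\delta n}$ points that are $(n,\varepsilon)$-separated. Because the windows are sparse and $n_{j+1}\gg$ everything before it, the time interval $[0,n]$ meets essentially one "active" window of length $n_j\approx n$. Outside that window the number of distinct patterns is at most $\exp(o(n))$. Hence exponentially many $\varepsilon_i$-distinct choices of $L$ must occur inside a single stage-$j$ window. By the nested design, the branches of $L$ that are $\varepsilon_i$-distinct inside the window then carry at least $e^{c\,j' n_j}$ separated descendants at scale $\varepsilon_{j'}$, for each $j'\le j$, with $c$ depending only on $\delta$. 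Letting $j\to\infty$ gives $h(T,L,\varepsilon_{j'})\ge c j'$ for every $j'$, so $h(T,L)=+\infty$.

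The main obstacle is this amplification step. A compact subset $L$ may select only a sparse subfamily of branches, and I must rule out that such a subfamily has positive but finite growth rate. The first thing I would try is to make the stage-$j$ family a product of many independent sub-blocks. A subset with exponential count at coarse scale must then use exponentially many coordinates, and within each coordinate the finer-scale choices are forced by the closure condition on $L$ (compactness prevents thinning at all finer scales simultaneously). If this fails, I would instead pass to a symbolic extension/partition coding along $\mu$. There the statement becomes a purely combinatorial one about subtrees of a tree whose branching number at depth $j$ tends to infinity.
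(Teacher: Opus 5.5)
You correctly flag the amplification step as the crux. However, neither remedy you propose closes it, and for $K$ as you describe it the step cannot be closed.

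\textbf{Why the amplification step fails.} Your nested design is a \emph{counting} property of $K$: each $\varepsilon_i$-cluster of the stage-$j$ family contains many $\varepsilon_{j'}$-separated members. Counting properties are not inherited by subsets. Your claim that the $\varepsilon_i$-distinct branches of $L$ ``carry $e^{cj'n_j}$ separated descendants'' silently assumes that $L$ contains those descendants, and it need not.

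Here is a concrete $L$. Inside each surviving stage-$j$ cylinder, keep only about $e^{\delta n_j}$ subcylinders, one from each of $e^{\delta n_j}$ distinct coarse clusters. Thin them, e.g.\ as in \cref{lem:tree}, so that their prefixes inside the window do not multiply early. Then intersect the resulting finite unions of closed cylinders. This $L$ is compact. Now apply the accounting you yourself use for $h(T,K)=+\infty$ and for the dichotomy: only $e^{o(n)}$ patterns occur outside the active window, and points in a common stage-$j$ cylinder are indistinguishable at a fixed scale once the tolerances drop below it. At every fixed scale, $L$ then has separated sets of size roughly $e^{\delta n}$ and no larger. Hence $0<h(T,L)<+\infty$.

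Your remedies fail for the same reason:
\begin{itemize}
\item The claim that compactness prevents thinning at all finer scales simultaneously is false. Intersections along any subtree are compact, and so are countable sets with one accumulation point, as in \cref{sec:one-limit}.
\item The combinatorial fallback cannot work either. A tree whose branching numbers tend to infinity always contains subtrees of bounded branching, so the dichotomy cannot be a statement about the tree alone.
\end{itemize}

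\textbf{What is missing.} You need a \emph{pairwise}, hence hereditary, coupling between scale and time. On $K$, the fine-scale name of a point over $[0,n)$ must be a bijective function of its coarse name over $[0,qn)$, with $q$ arbitrarily large. The paper builds exactly this.
\begin{itemize}
\item Infinite entropy and the relative Sinai theorem give Bernoulli partitions $\beta_t$ with alphabet $G_t=(\mathbb Z/k_{m_t}\mathbb Z)^{q_t}$, independent of everything built earlier.
\item The residual $\rho_{t,i}(x)=b_t(T^ix)-A_{t,i}(x)$, where $A_{t,i}$ is the $\alpha_{m_t}$-block on $[q_ti,q_ti+q_t)$, is then independent of $\alpha_1^T$.
\item Consequently a fibre $R^{-1}(r)$ carries a conditional measure equal to $\mu$ on $\alpha_1^T$. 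This gives $K$ positive coarse name growth.
\item On the fibre, $\Nname_{\beta_t}(n,L)=\Nname_{\alpha_{m_t}}(q_tn,L)$ holds for \emph{every} $L\subset K$.
\item Compact cores and the Hamming-ball estimate convert this into $h(T,L)\geq q_t\overline h_{\alpha_{m_t}}(L)-1$.
\end{itemize}
Katok's formula only supplies many separated points. It creates no such encoding of the far future into the present at a finer scale.

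\textbf{A secondary gap.} Your stage-$j$ subcylinders must lie inside the parent cylinder, agree up to $e^{o(n)}$ patterns outside the windows, and then follow prescribed segments in the new window. That is a specification-type concatenation property, which a general system lacks. ``Taking closures and using continuity of $T$'' does not provide it.
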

	
	\Cref{thm:finite-main} shows that the answer to \cref{ques:finite} is negative.
	\Cref{thm:infinite-main} establishes exactly the assertion of
	\cref{conj:infinite}. These statements do not give a complete classification
	of systems with infinite topological entropy: such a system need not carry an
	ergodic measure with infinite entropy.
	
	The paper is organized as follows. 
	In \cref{sec:preliminaries}, we introduce the preliminary notions and tools used throughout the paper. In particular, we introduce the language of finite rooted trees and dynamical name trees, and establish the correspondence between tree structures and the growth of dynamical names. 
	In \cref{sec:finite-part} we prove
	\cref{thm:finite-main}. A finite prefix-tree lemma permits simultaneous control
	of name growth for a refining sequence of clopen partitions. This gives exact
	entropy lowering inside a countable compact set with a unique accumulation
	point in zero-dimensional systems. A local-entropy theorem and a zero-dimensional principal extension then
	complete the proof. In \cref{sec:infinite-obstruction} we prove \cref{thm:infinite-main}. We use 
	relative Sinai theorem to create an independent finite-valued uniform i.i.d. process, disintegrate
	over the resulting residual codes, and obtain one compact fibre on which every
	positive name entropy is amplified without bound.
	
	\section{Preliminaries}
	\label{sec:preliminaries}
	
	Throughout, all logarithms are to base $e$. We write $\mathcal{M}(X,T)$ for the invariant Borel
	probability measures and $\mathcal{M}^e(X,T)$ for the ergodic ones. Standard background
	on topological and measure-theoretic entropy may be found in
	\cite{Walters1982,Downarowicz2011}; background on measure disintegration and relative
	entropy may be found in \cite{Glasner2003,EinsiedlerWard2011}.
	
	\subsection{Topological entropy of compact sets}
	
	If $\cU$ is a finite open cover of $X$, put
	\[
	\cU_0^{n-1}:=\join_{i=0}^{n-1}T^{-i}\cU
	\]
	and let $N(\cU_0^{n-1},K)$ be the least number of members of
	$\cU_0^{n-1}$ needed to cover $K$. Define
	\[
	h(T,\cU,K)
	:=
	\limsup_{n\to\infty}
	\frac1n\log N(\cU_0^{n-1},K),
	\quad
	h(T,K):=\sup_{\cU}h(T,\cU,K).
	\]
	For compact $K$, this agrees with the usual separated- and spanning-set
	description. More precisely, for a compatible metric $d$, set
	\[
	d_n(x,y):=\max_{0\leq i<n}d(T^ix,T^iy).
	\]
	If $r_n(d,T,\varepsilon,K)$ and $s_n(d,T,\varepsilon,K)$ denote the least
	spanning and greatest separated cardinalities, respectively, then their
	small-scale exponential growth gives $h(T,K)$, i.e. 
	\[h(T,K)=\lim_{\varepsilon\to 0}\limsup_{n\to\infty}\frac{1}{n}\log r_n(d,T,\varepsilon,K)=\lim_{\varepsilon\to 0}\limsup_{n\to\infty}\frac{1}{n}\log s_n(d,T,\varepsilon,K);\] see
	\cite{Bowen1973,Misiurewicz2004}.
	
	\begin{definition}
		\label{def:lowerable}
		A topological dynamical system $(X,T)$ is
		\begin{enumerate}
			\item \emph{lowerable} if, for every $0\leq h\leq h(T,X)$, there is a
			nonempty compact set $K_h\subset X$ with $h(T,K_h)=h$;
			\item \emph{hereditarily lowerable} if every nonempty compact set
			$K\subset X$ is lowerable, that is, if for every
			$0\leq h\leq h(T,K)$ there is a nonempty compact set
			$K_h\subset K$ with $h(T,K_h)=h$.
		\end{enumerate}
	\end{definition}
	
	\subsection{Names and partitions}
	
	For a finite Borel partition $\alpha$, write
	\[
	\alpha_0^{n-1}:=\join_{i=0}^{n-1}T^{-i}\alpha,
	\quad
	\alpha^T:=\join_{i\in\Z}T^{-i}\alpha.
	\]
	For a set $E\subset X$, let
	\[
	\Nname_\alpha(n,E)
	:=
	\#\{A\in\alpha_0^{n-1}:A\cap E\neq\varnothing\}
	\]
	for $n\geq1$; for a nonempty set $E$ we use the harmless convention
	$\Nname_\alpha(0,E)=1$, corresponding to the empty name, which we denote by $\emptyset$.  Define its upper
	name-growth rate by
	\[
	\overline h_\alpha(E)
	:=
	\limsup_{n\to\infty}\frac1n\log\Nname_\alpha(n,E).
	\]
	When $\alpha$ is clopen we also write this quantity as
	$h_\alpha(T,E)$. The symbol $\overline h_\alpha(E)$ is a combinatorial
	name-growth rate and should not be confused with the Kolmogorov--Sinai entropy
	of an invariant measure. We write $\alpha\prec\beta$ when $\beta$ refines
	$\alpha$, and set
	\[
	\mesh(\alpha):=\max\{\diam(A):A\in\alpha\}.
	\]
	As is well known. every compact zero-dimensional metric space admits a refining sequence of
	finite clopen partitions whose meshes tend to zero.

	\subsection{Finite rooted trees and dynamical name trees}\label{sec:name trees}

	We record the tree terminology used below.  A \emph{finite rooted tree} is a
	finite connected graph without cycles, together with a distinguished vertex
	$o$, called the \emph{root}.  Every vertex $v\neq o$ has a unique
	\emph{parent}, namely the adjacent vertex on the path from $v$ to $o$; the
	other adjacent vertices farther from $o$ are its \emph{children}.  A vertex
	$w$ is a \emph{descendant} of $v$ if the path from $w$ to $o$ passes through
	$v$.  The \emph{level} of $v$ is its graph distance from $o$, and the
	\emph{depth} or \emph{height} of the tree is the largest level of any vertex.  A vertex with
	no children is a \emph{leaf}.

	In what follows, only trees whose designated leaves lie at one fixed terminal level will be used.  Thus, if the depth is $n$, the set of terminal leaves is denoted by
	$\mathcal L_n$ and its cardinality by
	\[
	A:=|\mathcal L_n|.
	\]
	For $0\leq\ell\leq n$, a vertex at level $\ell$ is called \emph{active} if
	it has a descendant in $\mathcal L_n$.  The set of active vertices at level
	$\ell$ is denoted by $\mathcal V_\ell$, and
	\[
	B_\ell:=|\mathcal V_\ell|.
	\]
	For $v\in\mathcal V_\ell$, let
	\[
	A_v:=\#\{w\in\mathcal L_n:w\text{ is a descendant of }v\}.
	\]
	The descendant sets belonging to distinct vertices of the same level are
	disjoint and partition $\mathcal L_n$; in particular,
	\begin{equation}
		\sum_{v\in\mathcal V_\ell}A_v=A.
		\label{eq:descendant-partition}
	\end{equation}
	If $\mathcal F\subset\mathcal L_n$, a vertex $v\in\mathcal V_\ell$ is said to be
	\emph{occupied by $\mathcal F$} when at least one leaf of $\mathcal F$ is a
	descendant of $v$.  We write
	\[
	b_\ell(\mathcal F)
	:=
	\#\{v\in\mathcal V_\ell:v\text{ is occupied by }\mathcal F\}.
	\]

	After fixing an order on the children of every vertex, the \emph{depth-first
	search order} (DFS order) is obtained by exploring completely the subtree of
	one child before passing to the next child.  In this order the descendant
	leaves of every vertex form a consecutive interval; see, for example,
	\cite{Tarjan1972,CLRS2022}.  If the $A$ leaves are
	listed as $0,1,\ldots,A-1$ and the list is read modulo $A$, then
	\[
	\{u,u+1,\ldots,u+M-1\}\subset\Z/A\Z
	\]
	is called a \emph{cyclic block of $M$ leaves}.

	We next specialize this terminology to dynamical names.  Let $\alpha$ be a
	finite partition and let $P\subset X$ be finite.  For $x\in P$, its
	$\alpha$-name of length $n$ is the word
	\[
	\operatorname{name}_{\alpha,n}(x)
	=
	\bigl(\alpha(x),\alpha(Tx),\ldots,\alpha(T^{n-1}x)\bigr),
	\]
	where $\alpha(y)$ denotes the atom of $\alpha$ containing $y$.  Suppose that
	the points of $P$ have pairwise distinct $\alpha$-names of length $n$.  The
	\emph{$\alpha$-name tree of $P$ at depth $n$} has the empty word as its root;
	its active vertices at level $\ell$ are the distinct prefixes of length
	$\ell$ of these names, and an edge joins each nonempty prefix to the prefix
	obtained by deleting its last symbol.  The full length-$n$ names are its
	terminal leaves.

	\begin{remark}[Dictionary between rooted trees and dynamical names]
		\label{rem:name-tree-dictionary}
		For the $\alpha$-name tree of $P$ described above, the correspondence needed
		later is
		\begin{align*}
		\text{terminal leaf}
		&\longleftrightarrow
		\text{a distinct length-$n$ $\alpha$-name represented in $P$},\\
		\text{active vertex at level $\ell$}
		&\longleftrightarrow
		\text{a distinct length-$\ell$ prefix represented in $P$},\\
		B_\ell
		&=
		\Nname_\alpha(\ell,P),\\
		b_\ell(\mathcal F)
		&=
		\Nname_\alpha(\ell,F).
		\end{align*}
		Here $F\subset P$ is the set of points whose terminal names form
		$\mathcal F$.  Thus selecting leaves in the finite rooted tree is exactly
		selecting points of $P$, while controlling occupied vertices simultaneously
		at all levels is exactly controlling the numbers of dynamical name prefixes
		at all corresponding lengths.  The identity
		\eqref{eq:descendant-partition} says that every terminal name has a unique
		prefix of each prescribed length.
	\end{remark}
	

	\section{Hereditary lowerability of finite-entropy systems}
	\label{sec:finite-part}
    In this section, we prove that finite-entropy systems are hereditarily lowerable. We first work in the zero-dimensional setting, where entropy of compact sets can be computed through name growth with respect to refining clopen partitions. A finite rooted tree thinning argument is then used to realize prescribed intermediate name-growth rates inside countable compact sets with a unique accumulation point. Combining this construction with properties of relative entropy function gives hereditary lowerability for zero-dimensional finite-entropy systems, and a zero-dimensional principal extension transfers the conclusion to general finite-entropy systems.
    
	\subsection{Name entropy in zero-dimensional systems}
	\label{sec:name-entropy}
	
	\begin{lemma}
		\label{lem:clopen-detects}
		Let $X$ be zero-dimensional, and let
		\[
		\alpha_1\prec\alpha_2\prec\cdots
		\]
		be refining finite clopen partitions such that
		$\mesh(\alpha_m)\to0$. Then, for every compact set $E\subset X$,
		\[
		h(T,E)=\sup_{m\geq1}h_{\alpha_m}(T,E).
		\]
	\end{lemma}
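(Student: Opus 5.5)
We prove the two inequalities separately, using the open-cover definition of $h(T,K)$ for the upper bound and the separated-set description for the lower bound.

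\emph{The bound $h_{\alpha_m}(T,E)\le h(T,E)$.} Since $\alpha_m$ is a finite clopen partition, it is itself a finite open cover $\cU=\alpha_m$ of $X$. Because its members are pairwise disjoint, every member of $\cU_0^{n-1}=(\alpha_m)_0^{n-1}$ is an atom of the join partition. A subfamily of $\cU_0^{n-1}$ covers $E$ exactly when it contains every atom meeting $E$, since each point lies in only one atom. Hence
\[
N\bigl(\cU_0^{n-1},E\bigr)=\Nname_{\alpha_m}(n,E).
\]
Taking $\limsup\frac1n\log$ gives $h_{\alpha_m}(T,E)=h(T,\alpha_m,E)\le h(T,E)$. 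Taking the supremum over $m$ yields
\[
\sup_{m\ge1}h_{\alpha_m}(T,E)\le h(T,E).
\]

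\emph{The reverse inequality.} Let $\cU$ be an arbitrary finite open cover of $X$, and let $\delta>0$ be a Lebesgue number for $\cU$. Since $\mesh(\alpha_m)\to0$, choose $m$ with $\mesh(\alpha_m)<\delta$. Then each atom of $\alpha_m$ lies in some member of $\cU$, so $\alpha_m$ refines $\cU$. Consequently each atom $\bigcap_i T^{-i}A_i$ of $(\alpha_m)_0^{n-1}$ is contained in some $\bigcap_i T^{-i}U_i\in\cU_0^{n-1}$, where $A_i\subset U_i$. Choosing one such member for each atom meeting $E$ gives a subcover of $E$, so
\[
N\bigl(\cU_0^{n-1},E\bigr)\le\Nname_{\alpha_m}(n,E).
\]
Therefore $h(T,\cU,E)\le h_{\alpha_m}(T,E)\le\sup_m h_{\alpha_m}(T,E)$. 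Taking the supremum over $\cU$ gives $h(T,E)\le\sup_m h_{\alpha_m}(T,E)$.

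The only point requiring care is the identification in the first step: in the minimal-cover count $N$, members of the cover are treated as sets. Atoms of $(\alpha_m)_0^{n-1}$ that are empty, or that coincide for different index words, cause no discrepancy, because distinct nonempty atoms of a partition are disjoint. Everything else is routine, so there is no genuine obstacle. Neither the refining hypothesis $\alpha_1\prec\alpha_2\prec\cdots$ nor compactness of $E$ is actually needed beyond making the statement meaningful. Refinement only shows that $h_{\alpha_m}(T,E)$ is nondecreasing in $m$, so the supremum is also a limit.
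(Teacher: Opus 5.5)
Your proof is correct and follows the paper's argument. For one inequality you use that $\alpha_m$ is itself a finite open cover; for the other you choose $m$ by the Lebesgue number lemma so that $\alpha_m\succ\cU$, which gives $N(\cU_0^{n-1},E)\le\Nname_{\alpha_m}(n,E)$. Your opening sentence mentions the separated-set description, but the argument never uses it, and your check that $N\bigl((\alpha_m)_0^{n-1},E\bigr)=\Nname_{\alpha_m}(n,E)$ simply makes explicit a step the paper leaves implicit.
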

	
	\begin{proof}
		Each $\alpha_m$ is a finite open cover, so
		$h_{\alpha_m}(T,E)\leq h(T,E)$.  Conversely, let $\cU$ be a finite open
		cover.  The Lebesgue number lemma states that there is a number $\delta>0$
		such that every subset of $X$ having diameter less than $\delta$ is contained
		in some member of $\cU$.  Choose $m$ with $\mesh(\alpha_m)<\delta$.  Then,
		for every atom $A\in\alpha_m$, there exists $U_A\in\cU$ such that
		$A\subset U_A$; in other words, $\alpha_m\succ\cU$.  It follows that
		\[
			N(\cU_0^{n-1},E)
			\leq
			\Nname_{\alpha_m}(n,E),
		\]
		and consequently
		\[
		h(T,\cU,E)\leq h_{\alpha_m}(T,E).
		\]
		Taking the supremum over $\cU$ proves the result.
	\end{proof}
	
	\begin{lemma}
		\label{lem:finite-union}
		For a finite clopen partition $\alpha$ and sets $E_1,\ldots,E_r\subset X$,
		\[
		h_\alpha\left(T,\bigcup_{i=1}^rE_i\right)
		=
		\max_{1\leq i\leq r}h_\alpha(T,E_i).
		\]
		Consequently, if $F\subset E$ is finite and $E\setminus F\neq\varnothing$,
		then
		\[
		h_\alpha(T,E\setminus F)=h_\alpha(T,E).
		\]
	\end{lemma}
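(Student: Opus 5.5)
The first identity comes from two elementary bounds on name counts; the second then follows by splitting $E$ into two pieces.

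\emph{Step 1: the maximum formula.} Since $E_i\subset\bigcup_j E_j$, monotonicity gives
\[
\Nname_\alpha(n,E_i)\leq\Nname_\alpha\Bigl(n,\bigcup_j E_j\Bigr).
\]
Conversely, an atom of $\alpha_0^{n-1}$ that meets the union meets some $E_i$. Hence
\[
\Nname_\alpha\Bigl(n,\bigcup_{j} E_j\Bigr)\leq\sum_{i=1}^r\Nname_\alpha(n,E_i)\leq r\max_i\Nname_\alpha(n,E_i).
\]
Take $\frac1n\log$ of both bounds; the constant $\frac1n\log r$ tends to $0$. Then use that a $\limsup$ of a maximum of finitely many sequences equals the maximum of the $\limsup$'s. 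This holds because some index attains the maximum for infinitely many $n$, and we pass to that subsequence. Empty $E_i$ contribute $0$ names and can be discarded; the convention $\log 0$ only matters for them.

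\emph{Step 2: removing a finite set.} Write $E=(E\setminus F)\cup\bigcup_{x\in F}\{x\}$. A single point has exactly one name of each length, so $h_\alpha(T,\{x\})=0$. By Step 1,
\[
h_\alpha(T,E)=\max\{h_\alpha(T,E\setminus F),0\}.
\]
Because $E\setminus F\neq\varnothing$, we have $\Nname_\alpha(n,E\setminus F)\geq1$, so $h_\alpha(T,E\setminus F)\geq0$. Therefore the maximum equals $h_\alpha(T,E\setminus F)$.

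The only point needing care is the interchange of $\limsup$ with a finite maximum in Step 1, handled by the subsequence argument above. Note also that clopenness of $\alpha$ is not used; the statement holds for any finite partition.
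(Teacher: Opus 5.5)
Your proof is correct and follows the paper's argument: the sandwich $\max_i\Nname_\alpha(n,E_i)\leq\Nname_\alpha(n,\bigcup_iE_i)\leq r\max_i\Nname_\alpha(n,E_i)$, then $E=(E\setminus F)\cup F$ with the finite part having zero growth. The paper simply bounds $\Nname_\alpha(n,F)\leq|F|$ rather than splitting $F$ into singletons, which amounts to the same thing.

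One small tightening concerns the step you flagged, interchanging $\limsup$ with the finite maximum. An index that merely attains the maximum for infinitely many $n$ need not carry the $\limsup$. Instead, apply the pigeonhole inside a subsequence along which $\max_i\frac1n\log\Nname_\alpha(n,E_i)$ converges to its $\limsup$. Alternatively, note that for large $n$ every $\frac1n\log\Nname_\alpha(n,E_i)$ lies below its own $\limsup$ plus $\varepsilon$.
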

	
	\begin{proof}
		For $1\leq i\leq r$, put
		$N_i(n):=\Nname_\alpha(n,E_i)$ and
		$M(n):=\max_{1\leq i\leq r}N_i(n)$.  Monotonicity and subadditivity of the
		name count under unions give
		\[
		M(n)
		\leq
		\Nname_\alpha\left(n,\bigcup_{i=1}^rE_i\right)
		\leq
		\sum_{i=1}^rN_i(n)
		\leq
		rM(n).
		\]
		Since $(\log r)/n\to0$, the claimed finite union formula follows.

		If $F$ is finite, then $\Nname_\alpha(n,F)\leq|F|$ for every $n$, so
		$h_\alpha(T,F)=0$.  Applying the finite-union formula to
		$E=(E\setminus F)\cup F$ gives the finite deletion assertion.
	\end{proof}
	
	\subsection{DFS simultaneous leaf thinning}
	\label{sec:tree}
	
	The proof of the next lemma is an application of the elementary probabilistic
	method; compare \cite{AlonSpencer2016}.
	
	\begin{lemma}\label{lem:tree}
		Let $\mathcal T$ be a finite rooted tree of depth $n$, with exactly $A$
		terminal leaves at level $n$. For $1\leq\ell\leq n$, let $B_\ell$ denote
		the number of active vertices at level $\ell$. Then, for every integer $M$
		satisfying $1\leq M\leq A$, there exists a set $\mathcal F$ consisting of
		exactly $M$ terminal leaves such that
		\begin{equation}
			b_\ell
			\leq
			2\ell^2\left(1+\frac{MB_\ell}{A}\right),
			\quad 1\leq\ell\leq n.
			\label{eq:tree-bound}
		\end{equation}
		where $b_\ell=b_\ell(\mathcal F)$ denotes the number of active vertices at
		level $\ell$ occupied by $\mathcal F$.
	\end{lemma}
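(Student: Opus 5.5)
We will use the DFS order together with a random cyclic shift, following the hint in \cref{sec:name trees}. List the $A$ terminal leaves as $0,1,\ldots,A-1$ in DFS order, so that for every active vertex $v$ at every level the descendant leaves of $v$ form a consecutive interval $I_v$ of length $A_v$. We then select $M$ leaves that are spread out evenly in this cyclic order. Concretely, choose $u\in[0,1)$ uniformly at random and let
\[
\mathcal F:=\bigl\{\lfloor (u+j)A/M\rfloor : 0\leq j<M\bigr\}\subset\Z/A\Z .
\]
Since $A/M\geq1$, these are $M$ distinct leaves. Consecutive selected leaves are at distance at least $\lfloor A/M\rfloor$ and at most $\lceil A/M\rceil$.

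The key counting step is to bound, for a fixed level $\ell$, the number of occupied vertices deterministically rather than in expectation. Each interval $I_v$ of length $A_v$ contains at most $A_vM/A+1$ points of an evenly spaced set of spacing $A/M$. However, we need a bound on the number of intervals hit, not the number of points in each. For this we split the vertices at level $\ell$ into two classes. A \emph{large} vertex is one with $A_v\geq A/M$. By \eqref{eq:descendant-partition} there are at most $MB_\ell\cdot(\text{stuff})$ of them; more usefully, there are at most $M$ of them, since $\sum A_v=A$. The remaining vertices are \emph{small}. For a small vertex, the probability over $u$ that $I_v$ contains a selected point is at most $A_vM/A$ plus a rounding error of $M/A$ per leaf. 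Summing over the small vertices gives an expected count of at most $M+MB_\ell/A$.

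This alone yields the bound $b_\ell\leq 2M+MB_\ell/A$ in expectation. That is weaker than \eqref{eq:tree-bound} when $M\gg 1+MB_\ell/A$, i.e.\ when $B_\ell\ll A$. We must therefore exploit the fact that occupied vertices lie in at most $B_\ell$ classes, since $b_\ell\leq B_\ell$ trivially. The real issue is the regime $B_\ell\leq A/M$: there we need $b_\ell=O(\ell^2)$, while the $M$ selected points are spread evenly over all $A$ leaves and could hit up to $\min(M,B_\ell)$ vertices. An evenly spaced deterministic choice therefore fails, and this is the main obstacle.

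We instead take $\mathcal F$ to be a cyclic block of $M$ consecutive leaves $\{u,\ldots,u+M-1\}$, with $u$ uniform in $\Z/A\Z$. A block hits $I_v$ iff $u\in I_v-[0,M)$, which occurs with probability at most $(A_v+M-1)/A$. Moreover, the vertices fully contained in the block are disjoint intervals, so they cover at most $M$ leaves. The number of vertices hit equals the number of distinct intervals meeting a window of length $M$. Summing the probabilities gives
\[
\mathbb E\, b_\ell\leq \sum_{v\in\mathcal V_\ell}\frac{A_v+M}{A}=1+\frac{MB_\ell}{A}.
\]
This is the correct shape of bound. Markov's inequality then gives $\Pr\bigl[b_\ell>2\ell^2(1+MB_\ell/A)\bigr]\leq 1/(2\ell^2)$. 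A union bound over $\ell$ gives total failure probability at most $\sum_\ell 1/(2\ell^2)=\pi^2/12<1$, so some $u$ satisfies \eqref{eq:tree-bound} for all $\ell$ simultaneously. The step needing the most care is verifying the hitting-probability bound $(A_v+M-1)/A$ cyclically, including wraparound and the case $A_v+M>A$; in that case the probability is capped at $1$ and the bound still holds.
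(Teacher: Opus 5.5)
Your final argument is correct and is essentially the paper's proof. Like the paper, you take a uniformly random cyclic block of $M$ consecutive leaves in DFS order, use the hitting bound $(A_v+M-1)/A$, and obtain $\mathbb E\,b_\ell\leq 1+MB_\ell/A$ via \eqref{eq:descendant-partition}. Your Markov-plus-union-bound step (total failure probability at most $\pi^2/12<1$) is equivalent to the paper's device of showing $\mathbb E Z<2$ for $Z=\sum_\ell b_\ell/(\ell^2(1+MB_\ell/A))$; you should simply delete the abandoned evenly-spaced discussion.
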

	
	\begin{proof}
		Fix an order on the children of every vertex and list the terminal leaves in
		the induced DFS order.  Identify their positions with
		$\{0,1,\ldots,A-1\}$ and, when cyclic blocks are considered, with
		$\Z/A\Z$.  For an active vertex $v$, let $I_v$ be the set of positions of
		its descendant terminal leaves.  The defining property of DFS implies that
		$I_v$ is an interval in the linear leaf order, and its cardinality is $A_v$.

		Choose a random variable $U$ uniformly distributed on $\Z/A\Z$ and let
		\[
			\mathcal F_U
			:=
			\{U,U+1,\ldots,U+M-1\}\pmod A.
		\]
		This is a set of exactly $M$ terminal leaves.  For an active vertex $v$,
		write $X_v(U)$ for the indicator of the event that $v$ is occupied by
		$\mathcal F_U$.  Equivalently,
		\[
			X_v(U)=1
			\quad\Longleftrightarrow\quad
			\mathcal F_U\cap I_v\neq\varnothing.
		\]
		If $I_v$ has $A_v$ elements, the possible starting positions $U$ for which
		the cyclic block meets $I_v$ are contained in the cyclic enlargement of
		$I_v$ by the preceding $M-1$ positions.  This enlargement has at most
		$A_v+M-1$ elements.  Consequently,
		\begin{equation}
			\mathbb P\bigl(X_v=1\bigr)
			\leq
			\min\left\{1,\frac{A_v+M-1}{A}\right\}
			\leq
			\frac{A_v+M-1}{A}.
			\label{eq:vertex-hit-probability}
		\end{equation}

		At level $\ell$ the random number of occupied vertices is
		\[
			b_\ell(\mathcal F_U)
			=
			\sum_{v\in\mathcal V_\ell}X_v(U).
		\]
		By linearity of expectation, \eqref{eq:vertex-hit-probability}, and
		\eqref{eq:descendant-partition},
		\begin{align}
			\mathbb E b_\ell(\mathcal F_U)
			&\leq
			\sum_{v\in\mathcal V_\ell}
			\frac{A_v+M-1}{A}\notag\\
			&=
			\frac{1}{A}\sum_{v\in\mathcal V_\ell}A_v
			+\frac{(M-1)B_\ell}{A}\notag\\
			&=
			1+\frac{(M-1)B_\ell}{A}
			\leq
			1+\frac{MB_\ell}{A}.
			\label{eq:expected-prefix}
		\end{align}
		This estimate holds separately at every level.  To obtain one leaf set for
		which all the level estimates hold simultaneously, define the nonnegative
		random variable
		\[
			Z(U)
			:=
			\sum_{\ell=1}^n
			\frac{b_\ell(\mathcal F_U)}
			{\ell^2(1+MB_\ell/A)}.
		\]
		Using \eqref{eq:expected-prefix} for each term gives
		\[
			\mathbb E Z
			\leq
			\sum_{\ell=1}^n\frac1{\ell^2}
			<
			\sum_{\ell=1}^{\infty}\frac1{\ell^2}
			=
			\frac{\pi^2}{6}<2.
		\]
		It follows that there is a starting position $u\in\Z/A\Z$ for which
		$Z(u)<2$.  Put $\mathcal F:=\mathcal F_u$. then for each $1\leq\ell\leq n$ one has
		\[
			\frac{b_\ell(\mathcal F)}
			{\ell^2(1+MB_\ell/A)}
			\leq Z(u)<2,
		\]
		which yields \eqref{eq:tree-bound} simultaneously for
		all levels $\ell$.
	\end{proof}
	
	\begin{remark}
		The loss in \eqref{eq:tree-bound} is polynomial in the current level $\ell$,
		not merely in the total depth $n$. This distinction is essential when
		$\ell\ll n$.
	\end{remark}
	
	\subsection{Lowerability of countable compact sets with a unique accumulation point}
	\label{sec:one-limit}
	
	\begin{lemma}
		\label{lem:one-limit-point}
		Let $(X,T)$ be a zero-dimensional topological dynamical system and let
		\[
		C=\{x_*\}\cup\{x_j:j\geq1\},
		\quad x_j\longrightarrow x_*,
		\]
		be a countable compact set whose unique accumulation point is $x_*$. If
		\[
		H:=h(T,C)<+\infty,
		\]
		then for every $0<h<H$ there is a compact set $F\subset C$ such that
		$h(T,F)=h$. Moreover, $F$ has at most the one accumulation point $x_*$.
	\end{lemma}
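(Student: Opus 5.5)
Fix a refining sequence of clopen partitions $\alpha_1\prec\alpha_2\prec\cdots$ with $\mesh(\alpha_m)\to0$, so that by \cref{lem:clopen-detects} $H=\sup_m h_{\alpha_m}(T,C)$. The set $F$ will be $\{x_*\}\cup\bigcup_k F_k$, where the $F_k\subset\{x_j\}$ are finite and are chosen with \cref{lem:tree} at a rapidly increasing sequence of depths $n_1\ll n_2\ll\cdots$. Any such $F$ is compact, since every infinite subset of $\{x_j\}$ accumulates only at $x_*$, which lies in $F$. Two bounds are needed. The \emph{lower bound} asks that $F_k$ have at least about $e^{hn_k}$ distinct $\alpha_{m_k}$-names of length $n_k$, with $m_k\to\infty$ slowly. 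The \emph{upper bound} asks that for every fixed $m$ one has $\overline h_{\alpha_m}(F)\le h$, again using \cref{lem:clopen-detects}.

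\emph{Step 1 (choosing the pieces).} Because $h<H$, fix $m_0$ with $h_{\alpha_{m_0}}(T,C)>h$. For $m\ge m_0$ we then have $h_{\alpha_m}(T,C)>h$, so there are infinitely many $n$ with $\Nname_{\alpha_m}(n,C)\ge e^{h'n}$ for some fixed $h'\in(h,H)$. By \cref{lem:finite-union}, deleting any finite set of the $x_j$ does not change $h_{\alpha_m}(T,\cdot)$. At stage $k$ we therefore work in $C_k:=C\setminus(\text{finitely many points already used, together with } x_1,\dots,x_{J_k})$, where $J_k$ is chosen so large that all remaining $x_j$ lie in the $\alpha_{m_k}$-atom of $x_*$ for the first $N_k$ iterates. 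This is possible by continuity and clopenness, since $T^ix_j\to T^ix_*$ for $i<N_k$.

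Next pick $n_k$ with $\Nname_{\alpha_{m_k}}(n_k,C_k)\ge e^{h'n_k}$, and choose a finite set $P\subset C_k$ with pairwise distinct length-$n_k$ names realizing this count. By \cref{rem:name-tree-dictionary}, $P$ gives a tree with $A=|P|$ and $B_\ell=\Nname_{\alpha_{m_k}}(\ell,P)\le\Nname_{\alpha_{m_k}}(\ell,C)$. Apply \cref{lem:tree} with $M=\lceil e^{hn_k}\rceil$ to obtain $F_k$ satisfying
\[
\Nname_{\alpha_{m_k}}(\ell,F_k)\le 2\ell^2\bigl(1+e^{hn_k}B_\ell/A\bigr)\le 2\ell^2\bigl(1+e^{hn_k-h'n_k}\Nname_{\alpha_{m_k}}(\ell,C)\bigr)\quad\text{for }\ell\le n_k.
\]
This bound only requires $A\ge M$; it is convenient to take $A$ as close to $\Nname(n_k,C_k)$ as possible.

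\emph{Step 2 (upper bound, the main obstacle).} Fix $m$ and a length $\ell$, and bound $\Nname_{\alpha_m}(\ell,F)\le\sum_k\Nname_{\alpha_m}(\ell,F_k)+1$. There are three regimes for $k$. If $N_k\ge\ell$, then every point of $F_k$ shares the length-$\ell$ name of $x_*$, so the contribution is $1$; since $N_k$ grows, all but finitely many $k$ fall here. If $n_k\ge\ell$, the tree bound applies: the count is at most $\ell^2(1+e^{-(h'-h)n_k}\Nname(\ell,C))$. Here $\Nname(\ell,C)\le e^{(H+\varepsilon)\ell}$, which can exceed $e^{h\ell}$ by a lot when $\ell\ll n_k$. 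The danger is precisely the range where $\ell$ is comparable to $n_k$ but the factor $e^{-(h'-h)n_k}$ is too weak.

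My plan to handle this is to bound instead $\Nname(\ell,F_k)\le\min\{M,\ \text{tree bound}\}$ and to use the more careful estimate $B_\ell/A\le \Nname(\ell,C)/\Nname(n_k,C_k)$. Ratios of name counts satisfy $\Nname(n,C)\le\Nname(\ell,C)\Nname(n-\ell,C)$, but what is actually needed is a lower estimate for $A$ in terms of $B_\ell$. To get it, I would choose $n_k$ as a ``record time'' for which $\frac1{n}\log\Nname(n,C_k)$ nearly attains its running supremum while $\Nname(n_k,C_k)/\Nname(\ell,C_k)\ge e^{(h'-\varepsilon)(n_k-\ell)}\cdot e^{-o(n_k)}$ for all $\ell\le n_k$. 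Such record/``good growth'' times exist by a standard Riesz-sunrise argument applied to $\log\Nname(n,C_k)-h'n$. With this choice the tree bound gives roughly $\ell^2 e^{h n_k-h'(n_k-\ell)}\le \ell^2e^{h\ell}$, up to $\varepsilon$ corrections. The number of regimes-$2$ and $3$ indices $k$ with $N_k<\ell$ is $O(\log\ell)$, and for the earlier $k$ with $n_k<\ell$ we use $|F_k|\le M_k\le e^{hn_k}+1$. Summing gives $\Nname_{\alpha_m}(\ell,F)\le\mathrm{poly}(\ell)e^{(h+\varepsilon)\ell}$ for every $\ell$, hence $\overline h_{\alpha_m}(F)\le h+\varepsilon$; letting $\varepsilon=\varepsilon_k\to0$ along the stages yields $h(T,F)\le h$.

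\emph{Step 3 (lower bound).} $F_k$ consists of $M\ge e^{hn_k}$ points with distinct $\alpha_{m_k}$-names of length $n_k$. Hence $\Nname_{\alpha_{m}}(n_k,F)\ge e^{hn_k}$ for $m=m_k$, and since $\alpha_{m_0}\prec\alpha_{m_k}$, it suffices to have letting $m_k=m_0$ fixed for the lower bound to give $h_{\alpha_{m_0}}(T,F)\ge h$. Combining with Step 2, $h(T,F)=h$, and $F$ has at most the accumulation point $x_*$.
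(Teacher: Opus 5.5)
\textbf{There is a genuine gap: your Step 3 lower bound is incompatible with your Step 2 upper bound.} Your Step 2 device is sound for a single partition, and in fact works with no error terms. If $n$ is a strict record time of $g(n)=\log\Nname_{\alpha}(n,C_k)-h'n$ (these exist because $\sup_n g(n)=+\infty$), then $\Nname_\alpha(\ell,C_k)\le e^{-h'(n-\ell)}\Nname_\alpha(n,C_k)$ for all $0\le\ell\le n$. Taking $A=\Nname_\alpha(n,C_k)$ and $M=\lceil e^{hn}\rceil$ in \cref{lem:tree} then gives $\Nname_\alpha(\ell,F_k)\le 6\ell^2e^{h\ell}$ for every $\ell$. (The inequality $\Nname(n,C)\le\Nname(\ell,C)\Nname(n-\ell,C)$ you quote is false for non-invariant $C$, but you do not need it.)

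The failure is in Step 3. For $h(T,F)\le h$ you need $\overline h_{\alpha_m}(F)\le h$ for every fixed $m$. So the partition on which the tree is built must eventually refine every $\alpha_m$, i.e.\ $m_k\to\infty$. But then $F_k$ only has $e^{hn_k}$ distinct $\alpha_{m_k}$-names, which says nothing at a fixed scale. Even an infinite countable set of zero entropy satisfies $\Nname_{\alpha_m}(n_k,\cdot)\ge e^{hn_k}$ once $m$ is large relative to $n_k$. Your repair, $m_k=m_0$ for all $k$, destroys Step 2. The tree then controls only $\alpha_{m_0}$-prefixes, and nothing prevents the $e^{hn_k}$ points of $F_k$ from having pairwise distinct $\alpha_m$-names ($m>m_0$) already at length $\ell\approx hn_k/H$. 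That would push $\overline h_{\alpha_m}(F)$ up to $H$. Also, $\alpha_{m_0}\prec\alpha_{m_k}$ gives $\Nname_{\alpha_{m_0}}\le\Nname_{\alpha_{m_k}}$, which is the wrong direction for a lower bound.

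The missing idea is to decouple the two partitions, as the paper does.
\begin{enumerate}
\item At stage $s$, choose $P_s$ with pairwise distinct names for a \emph{coarse} partition $\alpha_{m(s)}$. The schedule $m(s)$ visits every $m\ge m_0$ infinitely often.
\item Build the name tree with the \emph{fine} partition $\alpha_{q_s}$, where $q_s\ge\max\{m(s),s\}$. Distinct coarse names force distinct fine names, so the leaves are still the points of $P_s$.
\item The leaf count $A_s\ge e^{c_sn_s}$, with $c_s\approx a_m=h_{\alpha_m}(T,C)$, comes from the coarse count. The level counts $B_{s,\ell}\le e^{D_s\ell}$, with $D_s\approx H$, come from the fine count. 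Your record-time trick does not directly apply, because the two counts concern different partitions.
\item Balancing the two regimes at $\ell=c_sn_s/D_s$ forces $M_s\approx e^{r_sn_s}$ with $r_s=hc_s/D_s<h$.
\item Hence at coarse scale $m$ one only gets $h_{\alpha_m}(T,F)\ge h\,a_m/H$. The full value $h$ is recovered only after taking $\sup_m$, using $a_m\uparrow H$.
\end{enumerate}
Without some such mechanism, your construction delivers either the upper bound or the lower bound, but not both.
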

	
	\begin{proof}
		Choose refining clopen partitions $\{\alpha_m\}_{m\geq1}$ as in
		\cref{lem:clopen-detects}, and put
		\[
		a_m:=h_{\alpha_m}(T,C).
		\]
		Since $\alpha_{m+1}\succ \alpha_m$, the sequence $(a_m)$ is
		nondecreasing.  Moreover, \cref{lem:clopen-detects} gives
		\begin{equation}
			a_m\uparrow H.
			\label{eq:am-to-H}
		\end{equation}
		Choose $m_0$ such that $a_m>0$ for every $m\geq m_0$.  Let $m(s)$ be a
		sequence in which every $m\geq m_0$ occurs infinitely often, such as, $m_0;~m_0,~ m_0+1;~ m_0,~ m_0+1,~ m_0+2; \dots$.  Recursively
		choose a strictly increasing sequence of integers $q_s$ satisfying
		\begin{equation}
			q_s\geq\max\{m(s),s\}.
			\label{eq:qs-choice}
		\end{equation}
		In particular, $q_s\to\infty$, and $\alpha_{q_s}\succ \alpha_{m(s)}$.  Choose $\delta_s\downarrow0$ so that
		\[
		c_s:=a_{m(s)}-\delta_s>0.
		\]
		Set
		\begin{equation}
			D_s:=H+\delta_s,
			\quad
			r_s:=\frac{h\,c_s}{D_s}.
			\label{eq:csr}
		\end{equation}
		Since $0<c_s<H<D_s$ and $0<h<D_s$, we have
		\begin{equation}
			0<r_s<c_s,
			\quad 0<r_s<h.
			\label{eq:rs-bounds}
		\end{equation}
		
		We now construct integers
		\[
		0=n_0<L_1<n_1<L_2<n_2<\cdots
		\]
		and finite subsets $F_s$.  Suppose that $n_{s-1}$ has already been
		chosen.  Since $h_{\alpha_{q_s}}(T,C)\leq H<D_s$, this permits us to choose $L_s>n_{s-1}$ such that
		\begin{equation}
			\Nname_{\alpha_{q_s}}(\ell,C)
			\leq e^{D_s\ell},~ \forall\ell\geq L_s.
			\label{eq:fine-upper}
		\end{equation}
		Let $Q_s$ be the atom of $(\alpha_{q_s})_0^{L_s-1}$ containing $x_*$.  It
		is a clopen neighbourhood of $x_*$.  Since $x_j\to x_*$, we may choose
		strictly increasing tail indices $J_s$ so that
		\[
		C_s:=\{x_*\}\cup\{x_j:j\geq J_s\}
		\]
		lies in $Q_s$.  Equivalently, all points of $C_s$ have the same
		$\alpha_{q_s}$-name of length $\ell$, $1\leq \ell\leq L_s$.  Since $C\setminus C_s$ is
		finite, \cref{lem:finite-union} gives
		\begin{equation}
			h_{\alpha_{m(s)}}(T,C_s)=a_{m(s)}.
			\label{eq:tail-entropy}
		\end{equation}
		The number $c_s$ is strictly smaller than the entropy in
		\eqref{eq:tail-entropy}.  Hence there are arbitrarily large integers $n$
		for which $\Nname_{\alpha_{m(s)}}(n,C_s)\geq e^{c_sn}$.  Choose such an
		$n_s>L_s$, increasing it if necessary so that $e^{r_sn_s}\geq2$.  Then
		\begin{equation}
			A_s:=\Nname_{\alpha_{m(s)}}(n_s,C_s)
			\geq e^{c_sn_s}
			\label{eq:coarse-lower}
		\end{equation}
		
		There are exactly $A_s$ atoms of $(\alpha_{m(s)})_0^{n_s-1}$ meeting
		$C_s$.  Select one representative from each of them.  In the atom containing
		$x_*$ choose a representative different from $x_*$. 
	    Denote the resulting
		set by $P_s\subset C_s\setminus\{x_*\}$.  Then $|P_s|=A_s$, and its points
		have pairwise distinct $\alpha_{m(s)}$-names of length $n_s$.
		
		Because $\alpha_{q_s}\succ\alpha_{m(s)}$, a fine name determines its
		coarse name.  Thus the points of $P_s$ also have pairwise distinct
		$\alpha_{q_s}$-names of length $n_s$.  Form the $\alpha_{q_s}$-name tree
		$\mathcal T_s$ of $P_s$ at depth $n_s$ as in
		\cref{sec:name trees}.  It has $A_s$ terminal leaves.  For
		$0\leq\ell\leq n_s$, let
		\[
			B_{s,\ell}
			:=
			\Nname_{\alpha_{q_s}}(\ell,P_s)
		\]
		be the number of active vertices at level $\ell$.  Since
		$P_s\subset C_s\subset Q_s$, all points of $P_s$ have one common prefix of
		length at most $L_s$.  For longer prefixes, use $P_s\subset C$ and
		\eqref{eq:fine-upper}.  Hence
		\begin{equation}
			B_{s,\ell}=
			\begin{cases}
				1, & 0\leq\ell\leq L_s,\\
				\leq e^{D_s\ell}, & L_s<\ell\leq n_s.
			\end{cases}
			\label{eq:prefix-count}
		\end{equation}
		Put
		\begin{equation}
			M_s:=\left\lfloor e^{r_sn_s}\right\rfloor.
			\label{eq:Ms}
		\end{equation}
		The conditions $e^{r_sn_s}\geq2$, $r_s<c_s$, and
		\eqref{eq:coarse-lower} imply
		\[
			1\leq M_s\leq e^{r_sn_s}<e^{c_sn_s}\leq A_s.
		\]
		Apply \cref{lem:tree} to $\mathcal T_s$ with
		$A=A_s$, $M=M_s$, and $B_\ell=B_{s,\ell}$.  It selects $M_s$ terminal
		leaves and therefore, by \cref{rem:name-tree-dictionary}, a point set
		$F_s\subset P_s$ with $|F_s|=M_s$.  Define
		\[
			b_{s,\ell}
			:=
			\Nname_{\alpha_{q_s}}(\ell,F_s).
		\]
		This is precisely the number of level-$\ell$ vertices occupied by the
		selected leaves.  The \cref{lem:tree} gives, simultaneously for
		$1\leq\ell\leq n_s$,
		\begin{equation}
			b_{s,\ell}
			\leq
			2\ell^2\left(1+\frac{M_sB_{s,\ell}}{A_s}\right).
			\label{eq:stage-tree-bound}
		\end{equation}
		
		Consider first a length $\ell$ satisfying
		\[
		L_s\leq\ell\leq\frac{c_sn_s}{D_s},
		\]
		Equations \eqref{eq:coarse-lower}, \eqref{eq:prefix-count}, and
		\eqref{eq:Ms} imply
		\[
		\frac{M_sB_{s,\ell}}{A_s}
		\leq
		\exp(r_sn_s+D_s\ell-c_sn_s).
		\]
		The definition of $r_s$ was chosen so that
		\[
		\begin{aligned}
		&r_sn_s+D_s\ell-c_sn_s-h\ell\\
		&\quad=
		(D_s-h)\left(\ell-\frac{c_sn_s}{D_s}\right)\leq0.
		\end{aligned}
		\]
		Hence $M_sB_{s,\ell}/A_s\leq e^{h\ell}$.  Since
		$1\leq e^{h\ell}$, \eqref{eq:stage-tree-bound} yields
		\begin{equation}
			b_{s,\ell}\leq4\ell^2e^{h\ell}.
			\label{eq:active-bound-one}
		\end{equation}
		If instead $c_sn_s/D_s<\ell\leq n_s$, the number of names is bounded by
		the number of selected points, and therefore
		\begin{equation}
			b_{s,\ell}\leq M_s
			\leq e^{r_sn_s}
			<e^{h\ell}.
			\label{eq:active-bound-two}
		\end{equation}
		For $1\leq\ell\leq L_s$ all selected points share one prefix.  For
		$\ell>n_s$ there are at most $M_s$ names, and
		$M_s<e^{h\ell}$ by \eqref{eq:rs-bounds}.  Combining these observations with
		\eqref{eq:active-bound-one} and \eqref{eq:active-bound-two}, we obtain
		\begin{equation}
			\Nname_{\alpha_{q_s}}(\ell,F_s)
			\leq4\ell^2e^{h\ell},~\forall\ell\geq1.
			\label{eq:cloud-upper}
		\end{equation}
		On the other hand, $F_s\subset P_s$ and the points of $P_s$ have distinct
		$\alpha_{m(s)}$-names of length $n_s$, so
		\begin{equation}
			\Nname_{\alpha_{m(s)}}(n_s,F_s)=M_s.
			\label{eq:cloud-lower}
		\end{equation}
		
		Define
		\begin{equation}
			F:=\{x_*\}\cup\bigcup_{s\geq1}F_s.
			\label{eq:F-definition}
		\end{equation}
		Every $F_s$ is finite and is contained in $\{x_j:j\geq J_s\}$, while
		$J_s\to\infty$.  
		It follows that $F$ is closed in the compact space $C$ and $x_*$ is its only possible
		accumulation point.
		
		We first establish the entropy upper bound.  Fix $k\geq1$.  By
		\eqref{eq:qs-choice}, $q_s\geq s$; hence $q_s\geq k$ whenever $s\geq k$.
		For every sufficiently large $n$, there is a unique $s=s(n)$ such that
		\begin{equation}
			n_{s-1}<n\leq n_s.
			\label{eq:stage-containing-n}
		\end{equation}
		As $n\to\infty$, one has $s(n)\to\infty$.  After discarding finitely many
		values of $n$, which does not affect an upper exponential growth rate, we may
		therefore assume that $s\geq k$ and $q_s\geq k$.

		Decompose $F$ into the earlier part $\bigcup_{t<s}F_t$, the current part $F_s$, and the later part $\bigcup_{t>s}F_t$.  
		By
		\eqref{eq:Ms}, \eqref{eq:rs-bounds}, and the strict increase of 
		$n_t$,
		\[
		\Nname_{\alpha_k}\left(n,\bigcup_{t<s}F_t\right)
		\leq\sum_{t<s}|F_t|
		\leq
		\sum_{t<s}e^{hn_t}
		\leq
		\sum_{j=1}^{n-1}e^{hj}
		\leq C_he^{hn},
		\]
		where $C_h$ depends only on $h$.  For the current part, the refinement
		$\alpha_k\prec\alpha_{q_s}$ means that every fine name determines a unique
		coarse name.  Hence \eqref{eq:cloud-upper} gives
		\begin{equation}
			\Nname_{\alpha_k}(n,F_s)
			\leq
			\Nname_{\alpha_{q_s}}(n,F_s)
			\leq4n^2e^{hn}.
			\label{eq:current-cloud-upper}
		\end{equation}
		Finally, if $t>s$, then
		\[
			n<L_t\quad\text{and}\quad q_t\geq t>s\geq k.
		\]
		By construction, $F_t\cup\{x_*\}$ lies in one atom of
		$(\alpha_{q_t})_0^{L_t-1}$.  Since $\alpha_{q_t} \succ \alpha_k$ and
		$n<L_t$, every point in each $F_t$ has the same
		$\alpha_k$-name of length $n$ as $x_*$.  All $F_t$, $t>s$  together with $x_*$ therefore
		contribute one name.  Summing these three contributions gives
		\begin{equation}
			\Nname_{\alpha_k}(n,F)
			\leq
			1+C_he^{hn}+4n^2e^{hn},
			\label{eq:global-upper}
		\end{equation}
		and therefore
		$h_{\alpha_k}(T,F)\leq h$.  This holds for every $k$, so
		\cref{lem:clopen-detects} gives
		\begin{equation}
			h(T,F)\leq h.
			\label{eq:entropy-upper}
		\end{equation}
		
		For the reverse inequality, fix $m\geq m_0$.  The schedule $m(s)$ assumes
		the value $m$ infinitely often.  Since $F_s\subset F$, \eqref{eq:cloud-lower} implies
		\[
		h_{\alpha_m}(T,F)
		\geq
		\limsup_{\substack{s\to\infty\\m(s)=m}}
		\frac1{n_s}\log M_s.
		\]
		Along this subsequence, $a_{m(s)}=a_m$ and $\delta_s\to0$, whence
		\[
			r_s
			=
			\frac{h(a_m-\delta_s)}{H+\delta_s}
			\longrightarrow
			\frac{h\,a_m}{H}.
		\]
		Since $n_s\to\infty$, it follows that
		\begin{equation}
			h_{\alpha_m}(T,F)
			\geq
			\frac{h\,a_m}{H}.
			\label{eq:fixed-m-lower}
		\end{equation}
		Taking the supremum over $m$ in \eqref{eq:fixed-m-lower} and using
		\eqref{eq:am-to-H}, we obtain
		\[
		h(T,F)
		\geq
		\sup_{m\geq m_0}\frac{h\,a_m}{H}
		=h.
		\]
		Together with \eqref{eq:entropy-upper}, this proves $h(T,F)=h$.
	\end{proof}
	
	\subsection{Hereditary lowerability of finite-entropy systems}
	\label{sec:finite-systems}
	We use the following local entropy facts. They are stated in
	\cite[Theorem~5.1]{HYZ2010}, based on the entropy point theory of \cite{YeZhang2007}. 
	
	We first recall the entropy function introduced in
	\cite[Definition~4.1]{YeZhang2007}. Let $d$ be a compatible metric on $X$.
	For $\varepsilon>0$ and a nonempty set $E\subset X$, put
	\[
	r(d,T,\varepsilon,E)
	:=
	\limsup_{n\to\infty}
	\frac{1}{n}
	\log r_n(d,T,\varepsilon,E),
	\]
	\begin{definition}[Entropy function]
		\label{def:entropy-function}
		Let $(X,T)$ be a topological dynamical system. For $x\in X$ and
		$\varepsilon>0$, define
		\[
		h_d(x,\varepsilon)
		:=
		\inf
		\left\{
		r(d,T,\varepsilon,C):
		C\text{ is a closed neighbourhood of }x\text{ in }X
		\right\}.
		\]
		Here a closed neighbourhood of $x$ means a closed set $C\subset X$
		such that $x\in\operatorname{int}_X C$.
		
		Since $h_d(x,\varepsilon)$ is nondecreasing as
		$\varepsilon\downarrow0$, the limit
		\[
		h_d(x)
		:=
		\lim_{\varepsilon\downarrow0}h_d(x,\varepsilon)
		\]
		exists in $[0,+\infty]$. Its value is independent of the choice of the
		compatible metric $d$. Thus the function
		\[
		h_X\colon X\longrightarrow[0,h(T,X)],
		\quad
		x\longmapsto h_X(x):=h_d(x),
		\]
		is called the entropy function of $(X,T)$. When there is no ambiguity,
		we write $h(x)$ instead of $h_X(x)$.
	\end{definition}
	
	For the argument below, we require the relative version of the entropy
	function given in \cite[Remark~5.13]{YeZhang2007}. Let $K\subset X$ be a
	nonempty compact set. For $x\in K$ and $\varepsilon>0$, define
	\[
	h_d(x,\varepsilon;K)
	:=
	\inf
	\left\{
	r(d,T,\varepsilon,C):
	C\text{ is a closed neighbourhood of }x\text{ in }K
	\right\},
	\]
	where ``closed neighbourhood in $K$'' refers to the relative topology of
	$K$; equivalently, $C$ is closed in $K$ and
	$x\in\operatorname{int}_K C$. Define
	\begin{equation}
		h(x,K)
		:=
		\lim_{\varepsilon\downarrow0}
		h_d(x,\varepsilon;K).
		\label{eq:relative-entropy-function}
	\end{equation}
	As in the case $K=X$, this value is independent of the compatible metric
	$d$. Hence
	\[
	h_K\colon K\longrightarrow[0,h(T,K)],
	\quad
	h_K(x):=h(x,K),
	\]
	is a well-defined function, called the relative entropy function of $K$.
	In particular,
	\[
	h_X(x)=h(x,X).
	\]
	
	The properties of the relative entropy function needed below follow from
	\cite[Remark~5.13]{YeZhang2007}; see also
	\cite[Theorem~5.1]{HYZ2010}.
	
	\begin{proposition}[Relative entropy function]
		\label{prop:local-entropy}
		Let $K$ be a nonempty compact subset of a topological dynamical system
		$(X,T)$, and let $h_K$ be its relative entropy function. Then:
		\begin{enumerate}
			\item
			$\sup_{x\in K}h_K(x)
			=
			\sup_{x\in K}h(x,K)
			=
			h(T,K).$
			
			\item For every $x\in K$ satisfying $h_K(x)>0$, there exists a
			countable compact set $C_x\subset K$ whose unique accumulation point
			in $X$ is $x$ and such that
			\[
			h(T,C_x)=h_K(x)=h(x,K).
			\]
		\end{enumerate}
	\end{proposition}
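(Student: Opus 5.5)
The proof has two parts. Part (i) is a compactness argument applied at each fixed scale. Part (ii) is a diagonal construction of finite separated clouds in shrinking neighbourhoods of $x$, whose upper bound then follows from (i) applied to the constructed set itself.

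\emph{Part (i).} The inequality $h(x,K)\leq h(T,K)$ is immediate, because every closed neighbourhood $C$ of $x$ in $K$ satisfies $C\subset K$, so $r(d,T,\varepsilon,C)\leq r(d,T,\varepsilon,K)$. For the reverse inequality, fix $\varepsilon>0$ and $\eta>0$, and put $S:=\sup_{y\in K}h_K(y)$. For each $y\in K$, choose a closed neighbourhood $C_y$ of $y$ in $K$ with
\[
r(d,T,\varepsilon,C_y)<h_d(y,\varepsilon;K)+\eta\leq h_K(y)+\eta\leq S+\eta .
\]
The middle inequality holds because $h_d(y,\varepsilon;K)$ is nondecreasing as $\varepsilon\downarrow0$. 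The interiors $\operatorname{int}_K C_y$ cover $K$, so finitely many $C_{y_1},\dots,C_{y_p}$ cover $K$. Spanning numbers are subadditive under unions: $r_n(\varepsilon,\bigcup_i C_{y_i})\leq\sum_i r_n(\varepsilon,C_{y_i})$. Exactly as in the proof of \cref{lem:finite-union}, this gives $r(d,T,\varepsilon,K)\leq\max_i r(d,T,\varepsilon,C_{y_i})\leq S+\eta$. Letting $\eta\to0$ and then $\varepsilon\to0$ yields $h(T,K)\leq S$.

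\emph{Part (ii): construction.} Fix $x$ with $h_K(x)>0$, a sequence of scales $\varepsilon_j\downarrow0$, and a schedule $j(k)$ in which every $j$ occurs infinitely often. Let $V_k:=\{y\in K:d(y,x)\leq1/k\}$, a closed neighbourhood of $x$ in $K$. By definition of the infimum, $r(d,T,\varepsilon_{j(k)},V_k)\geq h_d(x,\varepsilon_{j(k)};K)$. Since $s_n(\varepsilon,E)\geq r_n(\varepsilon,E)$ (a maximal $\varepsilon$-separated set is $\varepsilon$-spanning), we can choose $n_k\geq k$ and a finite $(n_k,\varepsilon_{j(k)})$-separated set $E_k\subset V_k\setminus\{x\}$ with
\[
|E_k|\geq\exp\bigl(n_k(h_d(x,\varepsilon_{j(k)};K)-1/k)\bigr).
\]
Removing $x$ costs at most one point, which is harmless. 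Set $C_x:=\{x\}\cup\bigcup_k E_k$. Each $E_k$ is finite and lies in $V_k$, so $C_x$ is compact and countable with $x$ as its only accumulation point. It is infinite, because $h_K(x)>0$ forces $|E_k|\to\infty$.

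\emph{Part (ii): entropy bounds.} For the lower bound, fix $j$. Along the infinitely many $k$ with $j(k)=j$, the set $C_x$ contains $(n_k,\varepsilon_j)$-separated sets of size at least $e^{n_k(h_d(x,\varepsilon_j;K)-1/k)}$. Using $r_n(\varepsilon_j/2)\geq s_n(\varepsilon_j)$, we get $h(T,C_x)\geq h_d(x,\varepsilon_j;K)$, and letting $j\to\infty$ gives $h(T,C_x)\geq h_K(x)$. For the upper bound, apply part (i) to the compact set $C_x$: $h(T,C_x)=\sup_{y\in C_x}h(y,C_x)$. Every $y\neq x$ in $C_x$ is isolated, so $\{y\}$ is a neighbourhood of $y$ and $h(y,C_x)=0$. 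At $x$, for any closed neighbourhood $C$ of $x$ in $K$, the set $C\cap C_x$ is a closed neighbourhood of $x$ in $C_x$ and is contained in $C$. Hence $h(x,C_x)\leq h(x,K)$, which gives $h(T,C_x)\leq h_K(x)$.

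The main obstacle is the scale bookkeeping in the lower bound. Separated sets at varying scales $\varepsilon_{j(k)}$ must still give entropy at each fixed scale, and this is why the schedule $j(k)$ revisits every scale infinitely often. The upper bound, by contrast, comes essentially for free from part (i).
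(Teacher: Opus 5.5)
Your argument is correct. It takes a genuinely different route from the paper, which gives no proof of this proposition. The paper imports both items from the entropy-point theory of Ye and Zhang (Remark 5.13 of their 2007 paper) and from Theorem 5.1 of Huang, Ye and Zhang (2010).

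Your proof is self-contained and uses only spanning and separated sets:
\begin{itemize}
\item Part (i) is a compactness argument at a fixed scale $\varepsilon$. You bound $r(d,T,\varepsilon,C_{y})\le S+\eta$ on finitely many relative neighbourhoods and then apply the finite-union rule for exponential growth rates.
\item For part (ii), the lower bound comes from a diagonal choice of separated clouds $E_k\subset V_k$, with the schedule $j(k)$ revisiting every scale, together with $s_n(\varepsilon)\le r_n(\varepsilon/2)$.
\item The upper bound in (ii) comes from applying (i) to $C_x$ itself.
\end{itemize}

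The citation buys the paper brevity and a link to the broader entropy-point framework. Your version buys transparency. It shows directly why a set with a single accumulation point $x$ can only carry the local value $h_K(x)$ and not $h(T,K)$. That is exactly why the proof of \cref{thm:zero-dimensional} must first pick a point with $h(x,K)>h$.

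The upper bound can be had even more directly. For every closed neighbourhood $C$ of $x$ in $K$, the set $C_x\setminus C$ is finite. Hence $r(d,T,\varepsilon,C_x)\le r(d,T,\varepsilon,C)$, and so $r(d,T,\varepsilon,C_x)\le h_d(x,\varepsilon;K)$. Your appeal to (i) is doing this implicitly.

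A few points to tidy up:
\begin{itemize}
\item You silently use that $h_d(x,\varepsilon;K)<\infty$ for each fixed $\varepsilon$. You need this when choosing $C_y$ in (i), and when writing $\exp\bigl(n_k(h_d(x,\varepsilon_{j(k)};K)-1/k)\bigr)$ in the case $h_K(x)=+\infty$. It is automatic: $r_n(d,T,\varepsilon,X)\le|\mathcal U|^n$ for any finite cover $\mathcal U$ by sets of diameter at most $\varepsilon$. Still, it is worth stating.
\item Removing $x$ from $E_k$ is unnecessary, since nothing in the construction or the bounds uses $x\notin E_k$. Drop that sentence rather than justify it.
\item $|E_k|$ does not tend to infinity along all $k$. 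It is unbounded only along those $k$ with $h_d(x,\varepsilon_{j(k)};K)>0$. Unboundedness is all you need to make $C_x$ infinite, so that $x$ really is an accumulation point. This is the only place where the hypothesis $h_K(x)>0$ enters.
\end{itemize}
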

	
	\begin{theorem}[Zero-dimensional finite-entropy case]
		\label{thm:zero-dimensional}
		If $(X,T)$ is zero-dimensional and $\htop(T,X)<+\infty$, then $(X,T)$ is
		hereditarily lowerable.
	\end{theorem}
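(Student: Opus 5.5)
Fix a nonempty compact set $K\subset X$ and a number $0\leq h\leq h(T,K)$. Since $h(T,K)\leq\htop(T,X)<+\infty$, all quantities below are finite. The endpoint values are handled directly. For $h=h(T,K)$ we take $K_h=K$. For $h=0$ we take $K_h$ to be a single point of $K$, whose entropy is zero because $\Nname_\alpha(n,\{x\})=1$ for every partition $\alpha$, so \cref{lem:clopen-detects} applies. It therefore remains to treat $0<h<h(T,K)$.

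By \cref{prop:local-entropy}(i), $\sup_{x\in K}h_K(x)=h(T,K)>h$, so we can choose $x\in K$ with $h<h_K(x)$. In particular $h_K(x)>0$. Then \cref{prop:local-entropy}(ii) supplies a countable compact set $C_x\subset K$ whose unique accumulation point is $x$ and for which
\[
H:=h(T,C_x)=h_K(x)\in(h,+\infty).
\]
Enumerate $C_x\setminus\{x\}$ as a sequence $x_j$. Every neighbourhood of $x$ contains all but finitely many points of $C_x$, since otherwise compactness would produce a second accumulation point. Hence $x_j\to x$, and $C_x$ has exactly the form $\{x_*\}\cup\{x_j\}$ required in \cref{lem:one-limit-point}, with $x_*=x$.

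Now apply \cref{lem:one-limit-point}, using that $X$ is zero-dimensional, $H<+\infty$ and $0<h<H$. It yields a compact set $F\subset C_x\subset K$ with $h(T,F)=h$, and we set $K_h:=F$. This set is nonempty because it contains $x_*$.

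There is no serious obstacle; the substance lies in the earlier lemmas. The points that need care are the strict inequality $h<h_K(x)$, obtained by using the supremum in (i) with $h<h(T,K)$, which ensures that \cref{lem:one-limit-point} applies with $h<H$, and the fact that the lemma is stated for a sequence converging to $x_*$. Nothing must be checked regarding the relative topology: the accumulation point of $C_x$ is taken in $X$, and $C_x$ is compact.
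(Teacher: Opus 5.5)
Your proposal is correct and follows the paper's proof step for step. You handle the endpoints directly, then use \cref{prop:local-entropy}(i) to pick $x\in K$ with $h(x,K)>h$ and (ii) to obtain the countable compact set $C_x$, and finally apply \cref{lem:one-limit-point}. Your added check that $C_x\setminus\{x\}$ is a sequence converging to $x$ is a harmless detail that the paper leaves implicit.
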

	
	\begin{proof}
		Let $K\subset X$ be nonempty and compact, and let
		$0\leq h\leq h(T,K)$. If $h=0$, take a singleton; if $h=h(T,K)$, take $K$
		itself. Suppose that $0<h<h(T,K)$. By
		\cref{prop:local-entropy} (i), there is an $x\in K$ such that $h(x,K)>h$.
		\cref{prop:local-entropy} (ii) then gives a countable compact set $C_x\subset K$ with unique
		accumulation point $x$ and
		\[
		h(T,C_x)=h(x,K)>h.
		\]
		Since $h(T,C_x)\leq\htop(T,X)<+\infty$,
		\cref{lem:one-limit-point} supplies a compact set
		$K_h\subset C_x\subset K$ satisfying $h(T,K_h)=h$.
	\end{proof}
	
	\begin{remark}
		The countable compact subset having the same entropy as $K$ provided by
		\cite[Theorem~5.1(3)]{HYZ2010}need not, in general, have a unique
		accumulation point.  For a strict intermediate value
		$0<h<h(T,K)$, it is enough to find a point whose local entropy is strictly
		larger than $h$ and then use \cref{prop:local-entropy} (ii). The endpoint
		$h=h(T,K)$ is realized by $K$ itself.
	\end{remark}
	
	We now pass from zero-dimensional systems to arbitrary systems. We use two
	standard facts about principal extensions. Every finite-entropy system admits
	a zero-dimensional principal extension; see
	\cite{BoyleDownarowicz2004,DownarowiczHuczek2013} and the proof
	of \cite[Theorem~3.8]{HYZ2014}. If
	\[
	\pi\colon(Y,S)\longrightarrow(X,T)
	\]
	is such an extension and $(X,T)$ has finite entropy, then the relative
	topological entropy equals zero. The conditional variational principles in
	\cite{LedrappierWalters1977,DownarowiczSerafin2002} and the compact-set
	inequality \cite[Proposition~7.3]{HYZ2010} consequently give
	\begin{equation}
		h(S,A)=h(T,\pi(A))
		\quad\text{for every compact }A\subset Y.
		\label{eq:compact-entropy-preserved}
	\end{equation}
	
	\begin{proof}[Proof of \cref{thm:finite-main}]
		Assume $\htop(T,X)<+\infty$, and choose a zero-dimensional principal
		extension
		\[
		\pi\colon(Y,S)\longrightarrow(X,T).
		\]
		By \eqref{eq:compact-entropy-preserved},
		\[
		\htop(S,Y)=\htop(T,X)<+\infty.
		\]
		Hence $(Y,S)$ is hereditarily lowerable by
		\cref{thm:zero-dimensional}.  Finally, since $\pi$ is a principal
		extension and $\htop(T,X)<+\infty$,
		\cite[Proposition~7.8 (2)]{HYZ2010} implies that $(X,T)$ is
		hereditarily lowerable.
	\end{proof}
	
	\begin{corollary}
		\label{cor:question}
		There is no finite-topological-entropy system which fails to be hereditarily
		lowerable. Consequently, the answer to \cref{ques:finite} is negative.
	\end{corollary}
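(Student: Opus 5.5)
The corollary is a direct restatement of \cref{thm:finite-main}, so the plan is simply to unwind the logic. Let $(X,T)$ be any topological dynamical system with $\htop(T,X)<+\infty$. By \cref{thm:finite-main}, $(X,T)$ is hereditarily lowerable. Hence no system of finite topological entropy can fail to be hereditarily lowerable, which is the first assertion.

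For the second assertion, note that \cref{ques:finite} asks for the existence of a finite-entropy system that is \emph{not} hereditarily lowerable. The first assertion shows the class of such systems is empty. The answer is therefore negative.

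The only place any care is needed is in making sure \cref{thm:finite-main} applies with no extra hypotheses. The zero-dimensional reduction in \cref{thm:zero-dimensional} covers every nonempty compact $K\subset X$ and every $h\in[0,h(T,K)]$, with the endpoints handled by singletons and by $K$ itself. The passage to general $X$ uses the zero-dimensional principal extension, which exists exactly under the finite-entropy assumption, and then \cite[Proposition~7.8 (2)]{HYZ2010}. Since these were all established above under the sole hypothesis $\htop(T,X)<+\infty$, there is no real obstacle, and the corollary follows immediately.
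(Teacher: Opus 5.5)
Your proposal is correct and matches how the paper obtains the corollary: it is an immediate restatement of \cref{thm:finite-main}, and since no finite-entropy system fails to be hereditarily lowerable, \cref{ques:finite} has a negative answer. One minor quibble with your recap: saying the zero-dimensional principal extension exists ``exactly'' under finite entropy overstates what is used---the paper only needs that such an extension exists when the entropy is finite, and finiteness does its essential work in \cref{lem:one-limit-point} and in the transfer step via \eqref{eq:compact-entropy-preserved} and \cite[Proposition~7.8 (2)]{HYZ2010}.
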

	
	
	\section{The infinite-measure-entropy obstruction}
	\label{sec:infinite-obstruction}
    In this section, we establish that infinite-measure-entropy provides a fundamental obstruction to hereditary lowerability. More precisely, we show that in systems with infinite-measure-entropy, there exists a compact subset whose entropy is infinite, and such that no intermediate entropy values can be realized on its compact subsets. The argument begins with quantitative estimates comparing the growth of partition names with the  entropy of the whole system. We then apply a relative version of Sinai’s theorem to construct independent uniform i.i.d. processes associated with increasingly refined partitions. By disintegrating the original measure, which has infinite entropy, over the resulting residual coding part, we obtain a compact fibre that carries infinite entropy. Finally, we show that the structure of this residual coding part enforces a dichotomy: every compact subset of this fibre has either zero entropy or infinite entropy, thereby excluding hereditary lowerability in this setting.
    
	\subsection{Counting lemmas}
	\label{sec:counting}
	
	\begin{lemma}
		\label{lem:subsequence}
		For every fixed integer $q\geq1$ and every $L\subset X$,
		\[
		\limsup_{n\to\infty}
		\frac1{qn}\log\Nname_\alpha(qn,L)
		=
		\overline h_\alpha(L).
		\]
	\end{lemma}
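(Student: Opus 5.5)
The inequality ``$\le$'' is immediate, because $\{qn\}_{n\ge1}$ is a subsequence of $\{n\}$ and a limsup along a subsequence cannot exceed the full limsup. The content of the lemma is therefore the reverse inequality. I will obtain it by interpolation, using that $\Nname_\alpha(\cdot,L)$ is nondecreasing in $n$ and that it grows by at most a bounded factor per step.

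First I record monotonicity. If $m\le n$, every atom of $\alpha_0^{n-1}$ that meets $L$ lies inside a unique atom of $\alpha_0^{m-1}$, and that atom also meets $L$. Two atoms of the finer partition may share a prefix, but every prefix atom meeting $L$ arises this way. Hence $\Nname_\alpha(m,L)\le\Nname_\alpha(n,L)$. For the one-step growth bound, each atom of $\alpha_0^{n}$ meeting $L$ is determined by its length-$n$ prefix atom, which meets $L$, together with one of the $|\alpha|$ atoms for the last coordinate. So $\Nname_\alpha(n+1,L)\le|\alpha|\,\Nname_\alpha(n,L)$. I only need monotonicity, but the growth bound is a fallback if I want to round down instead of up.

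Now take $n$ large and write $n\le qk<n+q$ with $k=\lceil n/q\rceil$. By monotonicity, $\Nname_\alpha(n,L)\le\Nname_\alpha(qk,L)$, so
\[
\frac1n\log\Nname_\alpha(n,L)\le\frac{qk}{n}\cdot\frac{1}{qk}\log\Nname_\alpha(qk,L).
\]
As $n\to\infty$ we have $k\to\infty$ and $qk/n\to1$. Taking limsup on both sides gives $\overline h_\alpha(L)\le\limsup_k\frac1{qk}\log\Nname_\alpha(qk,L)$.

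The only point needing care is the infinite case. Since $\alpha$ is finite, $\Nname_\alpha(n,L)\le|\alpha|^n$, so all quantities are at most $\log|\alpha|$ and the ratio $qk/n\to1$ may be passed through the limsup without trouble. If $L=\varnothing$, all counts vanish and both sides equal $-\infty$ under the usual convention, so that case is trivial. I expect no real obstacle; the one thing to check is that monotonicity holds for arbitrary (not necessarily compact or invariant) $L$, and the argument above uses nothing about $L$.
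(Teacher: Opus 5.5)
Your proof is correct and is essentially the paper's argument. The paper likewise rounds up to $q\lceil r/q\rceil$ and uses monotonicity of $n\mapsto\Nname_\alpha(n,L)$ to obtain $\frac1r\log\Nname_\alpha(r,L)\le\frac{qn}{r}\cdot\frac1{qn}\log\Nname_\alpha(qn,L)$, and takes the other inequality from the subsequence observation. Your explicit checks of monotonicity, of the bound by $\log|\alpha|$ that lets $qk/n\to1$ pass through the limsup, and of the empty-set case fill in details the paper leaves implicit.
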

	
	\begin{proof}
		The function $n\mapsto\Nname_\alpha(n,L)$ is nondecreasing. Given $r$, put
		$n=\lceil r/q\rceil$. Then $r\leq qn<r+q$, so
		\[
		\frac1r\log\Nname_\alpha(r,L)
		\leq
		\frac{qn}{r}\,
		\frac1{qn}\log\Nname_\alpha(qn,L).
		\]
		Taking the limsup gives one inequality; the other follows because the
		right-hand side is computed along a subsequence of the original sequence.
	\end{proof}
	
	\begin{lemma}
		\label{lem:mesh}
		Let $\{\alpha_m\}$ be finite Borel partitions with
		$\mesh(\alpha_m)\to0$. If a compact set $L$ satisfies $h(T,L)>0$, then
		$\overline h_{\alpha_m}(L)>0$ for some $m$.
	\end{lemma}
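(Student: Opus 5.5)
The plan is to compare $\alpha_m$-names directly with Bowen separated sets, using the separated-set description of $h(T,L)$ recalled in the preliminaries. Since the partitions are only Borel (not clopen, and not open covers), I will not try to invoke \cref{lem:clopen-detects}. The proof instead rests on the elementary observation that two points carrying the same long $\alpha_m$-name stay uniformly close along the orbit segment.

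\emph{Step 1: fix a scale.} Fix a compatible metric $d$. Since
\[
h(T,L)=\lim_{\varepsilon\to0}\limsup_{n\to\infty}\frac1n\log s_n(d,T,\varepsilon,L)
\]
and $h(T,L)>0$ (possibly $+\infty$), there is $\varepsilon>0$ with
\[
c:=\limsup_{n\to\infty}\frac1n\log s_n(d,T,\varepsilon,L)>0.
\]
This uses that the inner quantity is nondecreasing as $\varepsilon\downarrow0$, so a positive limit forces positivity at some fixed scale. Next, because $\mesh(\alpha_m)\to0$, choose $m$ with $\mesh(\alpha_m)<\varepsilon$.

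\emph{Step 2: separated points have distinct names.} Let $E\subset L$ be $(n,\varepsilon)$-separated, meaning $d_n(x,y)>\varepsilon$ (or $\geq\varepsilon$, depending on convention) for distinct $x,y\in E$. Suppose $x,y\in E$ lie in the same atom of $(\alpha_m)_0^{n-1}$. Then for each $0\leq i<n$ the points $T^ix$ and $T^iy$ lie in a common atom of $\alpha_m$. Hence $d(T^ix,T^iy)\leq\mesh(\alpha_m)<\varepsilon$, so $d_n(x,y)<\varepsilon$, which contradicts separation under either convention. Thus distinct points of $E$ lie in distinct atoms of $(\alpha_m)_0^{n-1}$ that meet $L$, and
\[
s_n(d,T,\varepsilon,L)\leq\Nname_{\alpha_m}(n,L)\quad\text{for all }n\geq1.
\]

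\emph{Step 3: conclude.} Taking $\frac1n\log$ and the $\limsup$ gives $\overline h_{\alpha_m}(L)\geq c>0$.

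The only point needing care is Step 2. We need the strict inequality $\mesh(\alpha_m)<\varepsilon$, so that the boundary case $d_n=\varepsilon$ is excluded whichever separation convention is used. We also need that being in the same atom of the join means being in the same $\alpha_m$-atom at each time $i<n$, which holds by definition of the name. Neither compactness beyond finiteness of $s_n$ nor any regularity of the atoms is needed, so Borel partitions suffice.
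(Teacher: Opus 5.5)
Your proof is correct and is the paper's argument: fix a scale $\varepsilon$ at which the entropy of $L$ is already positive, choose $m$ with $\mesh(\alpha_m)<\varepsilon$, and use the fact that every atom of $(\alpha_m)_0^{n-1}$ has $d_n$-diameter less than $\varepsilon$. The only difference is that you bound $s_n(d,T,\varepsilon,L)$ by sending separated points to distinct name atoms, whereas the paper takes one point from each name atom meeting $L$ to obtain a spanning set and so bounds $r_n(d,T,\varepsilon,L)$; either choice works here.
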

	
	\begin{proof}
		Choose $\varepsilon>0$ at which the $\varepsilon$-scale spanning entropy of
		$L$ is positive, and then take $m$ with
		$\mesh(\alpha_m)<\varepsilon$. Each atom of
		$(\alpha_m)_0^{n-1}$ has $d_n$-diameter less than $\varepsilon$. Selecting
		one point from every name atom meeting $L$ therefore gives an
		$(n,\varepsilon)$-spanning set, whence
		\[
		\Nname_{\alpha_m}(n,L)\geq r_n(d,T,\varepsilon,L).
		\]
		The exponential limsup is positive.
	\end{proof}
	
	For $0<u<1$, let
	\[
	\Hb(u):=-u\log u-(1-u)\log(1-u)
	\]
	denote the binary entropy function. The following estimate is the usual
	Hamming-ball bound from information theory; see
	\cite{CoverThomas2006,Shannon1949}.
	
	\begin{lemma}
		\label{lem:hamming-bowen}
		Let $\beta=\{B_g:g\in G\}$ be a finite Borel partition. For every $g\in G$,
		let $C_g\subset B_g$ be a nonempty compact set, and put
		\[
		D:=X\setminus\bigcup_{g\in G}C_g,
		\quad
		\Delta:=\min_{g\neq g'}d(C_g,C_{g'})>0.
		\]
		Suppose that a compact set $L$ has the following property: for some
		$\theta\in(0,1/4)$ and $N$, every $x\in L$ and $n\geq N$ satisfy
		\[
		\#\{0\leq i<n:T^ix\in D\}\leq\theta n.
		\]
		Then
		\[
		h(T,L)
		\geq
		\overline h_\beta(L)-\Hb(2\theta)-2\theta\log|G|.
		\]
	\end{lemma}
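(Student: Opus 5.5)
Fix $\varepsilon$ with $0<\varepsilon<\Delta/2$. The plan is to compare $\beta$-names with $(n,\varepsilon)$-separation and to pay for the positions where an orbit visits $D$ with a Hamming-ball count. The key observation is that when $T^ix\in C_g$ and $T^iy\in C_{g'}$ with $g\neq g'$, we have $d(T^ix,T^iy)\geq\Delta>\varepsilon$. So two points of $L$ can fail to be $(n,\varepsilon)$-separated only if their $\beta$-names of length $n$ disagree exclusively at times $i$ when at least one of $T^ix,T^iy$ lies in $D$. If both points are in $\bigcup_g C_g$ and have different $\beta$-symbols, they lie in different $C_g$, because $C_g\subset B_g$.

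We show that an $(n,\varepsilon)$-spanning set cannot be too small. Let $n\geq N$ and let $S$ be an $(n,\varepsilon)$-spanning subset of $L$. For $y\in S$, consider the points $x\in L$ with $d_n(x,y)\leq\varepsilon$. At each index outside $D_x\cup D_y$, where $D_x:=\{i<n:T^ix\in D\}$ and similarly for $y$, the $\beta$-symbols of $x$ and $y$ must agree. Since $|D_x|,|D_y|\leq\theta n$, the name of $x$ differs from the name of $y$ on a set of at most $2\theta n$ positions, and the symbols there are arbitrary. The number of such names is therefore at most
\[
\sum_{j\leq 2\theta n}\binom{n}{j}|G|^{j}\leq e^{n\Hb(2\theta)}|G|^{2\theta n}.
\]
The inequality is the standard Hamming-ball estimate, valid because $2\theta<1/2$. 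Every $\beta$-name of length $n$ realised in $L$ arises from some $x$ near some $y\in S$. It follows that
\[
\Nname_\beta(n,L)\leq r_n(d,T,\varepsilon,L)\,e^{n(\Hb(2\theta)+2\theta\log|G|)}.
\]

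Taking $\frac1n\log$ and the limsup over $n$ gives
\[
\overline h_\beta(L)\leq \limsup_n\tfrac1n\log r_n(d,T,\varepsilon,L)+\Hb(2\theta)+2\theta\log|G|.
\]
The spanning growth rate at scale $\varepsilon$ is bounded above by $h(T,L)$, because it increases as $\varepsilon\downarrow0$ towards $h(T,L)$, the limit recalled in the preliminaries. This yields the claim. If one prefers to use $\Delta$ directly, any fixed $\varepsilon<\Delta/2$ works and no limit is needed.

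The main obstacle is the separation step. One must check that every disagreement between the names of $x$ and $y$ at a time when both orbits lie in $\bigcup_g C_g$ really forces $d(T^ix,T^iy)\geq\Delta$. The hypothesis $C_g\subset B_g$ ensures that the $\beta$-symbol identifies which $C_g$ the point lies in. One must also make sure that the counting of disagreement positions uses the union $D_x\cup D_y$, of size at most $2\theta n$, rather than $D_x$ alone; this is why the bound involves $2\theta$. The condition $\theta<1/4$ is exactly what makes the binomial tail bound $\sum_{j\leq 2\theta n}\binom nj\leq e^{n\Hb(2\theta)}$ applicable.
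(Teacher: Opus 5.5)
Your proof is correct and follows essentially the same route as the paper. Both arguments cover $L$ by Bowen balls at a scale below $\Delta$: the paper uses a maximal $(n,\Delta/3)$-separated subset of $L$, while you use an $(n,\varepsilon)$-spanning subset of $L$ with $\varepsilon<\Delta/2$. Both then note that within one ball the $\beta$-names can disagree only at the at most $2\theta n$ times when one of the orbits lies in $D$, and bound the number of names per ball by a Hamming-ball count. Your form of that count simply drops the harmless polynomial factor the paper carries.
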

	
	\begin{proof}
		Let $E_n$ be a maximal $(n,\Delta/3)$-separated subset of $L$, which Bowen balls cover $L$. If two points in one such ball are outside $D$ at time $i$
		but lie in different atoms of $\beta$, then their images lie in two distinct
		compact subsets and are at least $\Delta$ apart. On the other hand, both are
		within $\Delta/3$ of the ball centre in the Bowen metric, a contradiction.
		Thus their $\beta$-names differ in at most $2\theta n$ coordinates.
		
		A Hamming ball of radius $2\theta n$ in an alphabet of cardinality $|G|$
		contains at most
		\[
		\sum_{j\leq2\theta n}\binom nj|G|^j
		\leq
		\exp\left(
		n\bigl[\Hb(2\theta)+2\theta\log|G|+\log(n+1)/n\bigr]
		\right)
		\]
		words. Therefore
		\[
		\Nname_\beta(n,L)
		\leq
		|E_n|
		\exp\left(
		n\bigl[\Hb(2\theta)+2\theta\log|G|+\log(n+1)/n\bigr]
		\right).
		\]
		Taking exponential growth rates gives the assertion.
	\end{proof}
	
	\subsection{Proof of the infinite-measure-entropy obstruction}
	\label{sec:infinite-proof}
	
    The relative Sinai theorem used below is the version stated in \cite[Lemma~4.2]{HYZ2014}, whose roots can be traced back to \cite{OrnsteinWeiss1975,Thouvenot1975,Kieffer1984}. We also use the
	Shannon--McMillan--Breiman and Birkhoff ergodic theorems in their standard finite
	partition forms; see \cite{Walters1982}.
	
	\begin{proof}[Proof of \cref{thm:infinite-main}]
		Fix $\mu\in \mathcal{M}^e(X,T)$ with $h_\mu(T)=+\infty$.  We begin by
		choosing the finite partitions that will provide, respectively, a fixed
		positive entropy scale and a sequence of spatial scales tending to zero.
		Since
		\[
		h_\mu(T)=\sup_{\alpha\in\cP_X}h_\mu(T,\alpha)=+\infty,
		\]
		choose a finite Borel partition $\alpha_1$ such that
		\[
		a:=h_\mu(T,\alpha_1)>0.
		\]
		Choose finite Borel partitions $\tau_m$ such that
		$\mesh(\tau_m)\to0$, and set
		\[
		\alpha_m:=\alpha_1\vee\tau_m,
		\]
		discarding empty atoms. Then $\mesh(\alpha_m)\to0$ and every $\alpha_m$
		refines $\alpha_1$. Write
		$k_m=\#\alpha_m$ and label the atoms of $\alpha_m$ by
		$\Z/k_m\Z$.

		We next construct a sequence of Bernoulli partitions.  Enumerate
		the pairs of positive integers as
		\[
		(m_t,q_t),\quad t\geq1,
		\]
		so that, for each fixed $m$, the corresponding values of $q_t$ are
		unbounded. Put
		\[
		G_t:=(\Z/k_{m_t}\Z)^{q_t}.
		\]
		We recursively construct finite partitions $\beta_t$. Suppose that
		$\beta_1,\ldots,\beta_{t-1}$ have already been chosen, and put
		\[
		\xi_t
		:=
		\alpha_1\vee
		\alpha_{m_1}\vee\cdots\vee\alpha_{m_t}\vee
		\beta_1\vee\cdots\vee\beta_{t-1},
		\quad
		\cF_t:=\xi_t^T.
		\]
		The entropy $h_\mu(T,\xi_t)$ is finite. Since the total measure entropy is
		infinite, there is a finite partition $\zeta_t$ such that
		\[
		h_\mu(T,\xi_t\vee\zeta_t)-h_\mu(T,\xi_t)
		\geq\log|G_t|.
		\]
		Apply the relative Sinai theorem to
		$\alpha=\xi_t$, $\gamma=\xi_t\vee\zeta_t$, and the uniform distribution on
		$G_t$. We obtain a partition $\beta_t$, labelled by $G_t$, such that
		\begin{enumerate}
			\item the process $(\beta_t(T^ix))_{i\in\Z}$ is uniform i.i.d.;
			\item $\beta_t^T$ is independent of $\cF_t$.
		\end{enumerate}

		Let
		$b_t(x)\in G_t$ be the label of the $\beta_t$-atom containing $x$, and
		let $a_m(x)\in\Z/k_m\Z$ be the label of the $\alpha_m$-atom containing $x$.
		Define
		\[
		A_{t,i}(x)
		:=
		\bigl(
		a_{m_t}(T^{q_ti}x),\ldots,
		a_{m_t}(T^{q_ti+q_t-1}x)
		\bigr)\in G_t
		\]
		and
		\[
		\rho_{t,i}(x):=b_t(T^ix)-A_{t,i}(x)\in G_t.
		\]
		Let
		\[
		R_t(x):=(\rho_{t,i}(x))_{i\in\Z},
		\quad
		\cR_t:=\sigma(R_t).
		\]
		Put
		\[
		S_t:=G_t^\Z,
		\quad
		B_t(x):=\bigl(b_t(T^ix)\bigr)_{i\in\Z}\in S_t.
		\]
		The process $A_t=(A_{t,i})_{i\in\Z}$ is $\cF_t$-measurable, whereas
		$B_t$ is independent of $\cF_t$ and has the uniform product distribution
		$\lambda_t$ on $S_t$.  Moreover, $R_t=B_t-A_t$, with subtraction taken
		coordinatewise.
		
		If $f$ is a bounded Borel
		function on $S_t$, define
		\[
		H(x):=\int_{S_t}f\bigl(b-A_t(x)\bigr)\,d\lambda_t(b).
		\]
		This is $\cF_t$-measurable.  For $D\in\cF_t$, let
		\[
		\nu_D(E):=\mu\bigl(D\cap\{A_t\in E\}\bigr),
		\quad E\in\cB(S_t).
		\]
		Independence of $B_t$ and $\cF_t$ gives, for Borel sets $C,E\subset S_t$,
		\[
		\mu\bigl(D\cap\{B_t\in C\}\cap\{A_t\in E\}\bigr)
		=
		\lambda_t(C)\nu_D(E).
		\]
		By the $\pi$--$\lambda$ theorem, the finite measure induced by
		$(B_t,A_t)$ on $D$ is therefore $\lambda_t\otimes\nu_D$, and Fubini's
		theorem yields
		\[
		\begin{aligned}
			\int_D f(B_t-A_t)\,d\mu
			&=
			\int_{S_t\times S_t}f(b-a)\,d\lambda_t(b)\,d\nu_D(a)\\
			&=
			\int_D\left[\int_{S_t}f\bigl(b-A_t(x)\bigr)\,d\lambda_t(b)\right]d\mu(x)
			=
			\int_DH\,d\mu.
		\end{aligned}
		\]
		Consequently,
		\[
		\mathbb E_\mu\bigl[f(R_t)\mid\cF_t\bigr](x)
		=
		\int_{S_t} f(b-A_t(x))\,d\lambda_t(b)
		=
		\int_{S_t} f(b)\,d\lambda_t(b).
		\]
		The last equality follows from translation invariance of the Haar
		measure $\lambda_t$.
		
		Hence, $\cR_t\perp\cF_t$. Indeed, for any $C\in\cB(S_t)$, apply the formula to $f=\mathbf 1_C$ to obtain
		\[
		\mathbb E_\mu\bigl[
		\mathbf 1_{\{R_t\in C\}}\mid\cF_t
		\bigr]
		=
		\lambda_t(C).
		\]
		Taking expectations shows that $\mu(R_t\in C)=\lambda_t(C)$.  Hence, for
		$D\in\cF_t$,
		\[
		\begin{aligned}
			\mu\bigl(\{R_t\in C\}\cap D\bigr)
			&=
			\int_D\mathbb E_\mu\bigl[
			\mathbf 1_{\{R_t\in C\}}\mid\cF_t
			\bigr]d\mu\\
			&=
			\lambda_t(C)\mu(D)
			=
			\mu(R_t\in C)\mu(D).
		\end{aligned}
		\]
		Since all events $\{R_t\in C\}$ generate $\cR_t=\sigma(R_t)$, this proves
		\begin{equation}
			\cR_t\perp\cF_t.
			\label{eq:residual-independence}
		\end{equation}
	    Put
		\[
		\mathcal A:=\alpha_1^T,
		\quad
		\mathcal J_{t-1}:=\bigvee_{s<t}\mathcal R_s.
		\]
		For $s<t$, the $\sigma$-algebra $\cF_t$ contains both
		$\alpha_{m_s}^T$ and $\beta_s^T$.  Since $R_s$ is measurable with respect
		to $\alpha_{m_s}^T\vee\beta_s^T$, it follows that
		\[
		\mathcal A\vee\mathcal J_{t-1}
		\subset\mathcal F_t.
		\]
		Consequently, \eqref{eq:residual-independence} gives the stronger recursive
		relation
		\begin{equation}
			\mathcal R_t
			\perp
			\bigl(\mathcal A\vee\mathcal J_{t-1}\bigr).
			\label{eq:recursive-residual-independence}
		\end{equation}
		If $D\in\mathcal A$ and $E_s\in\mathcal R_s$ for $1\leq s\leq t$,
		repeated application of \eqref{eq:recursive-residual-independence} yields
		\begin{align}
			\mu\left(D\cap\bigcap_{s=1}^tE_s\right)
			&=
			\mu(E_t)\,
			\mu\left(D\cap\bigcap_{s=1}^{t-1}E_s\right)\notag\\
			&=
			\mu(D)\prod_{s=1}^t\mu(E_s).
			\label{eq:finite-residual-cylinder}
		\end{align}
		Taking $D=X$ shows that
		\[
		\mu\left(\bigcap_{s=1}^tE_s\right)
		=
		\prod_{s=1}^t\mu(E_s).
		\]
		Thus every finite $R_s$-cylinder is independent of $\mathcal A$.
		
		For completeness, let $\mathscr P$ be the family consisting of $X$ and all
		finite intersections
		\[
		\bigcap_{j=1}^m E_{t_j},
		\quad E_{t_j}\in\mathcal R_{t_j}.
		\]
		Then $\mathscr P$ is a $\pi$-system and
		$\sigma(\mathscr P)=\bigvee_{t\geq1}\mathcal R_t$. For fixed \(D\in\mathcal F_t\), let
		\[
		\mathscr L_D
		:=
		\{E\in\mathcal B(S_t\times S_t):
		\nu_D^{(B,A)}(E)=(\lambda_t\otimes\nu_D)(E)\}.
		\]
		Then \(\mathscr L_D\) is a \(\lambda\)-system.
		\eqref{eq:finite-residual-cylinder} shows that
		$\mathscr P\subset\mathscr L_D$.  Dynkin's $\pi$--$\lambda$ theorem therefore
		implies $\sigma(\mathscr P)\subset\mathscr L_D$.  Since this holds for every
		$D\in\mathcal A$, we have proved
		\begin{equation}
			\join_{t\geq1}\cR_t =:\cR
			\perp
			\alpha_1^T.
			\label{eq:total-independence}
		\end{equation}
		We may note that this $\sigma$-algebra $\cR$ need not be $T$-invariant; however, only ordinary
		disintegration over the associated Borel map will be used.

		Choose
		$\theta_t\in(0,1/4)$ so small that
		\begin{equation}
			e_t:=\Hb(2\theta_t)+2\theta_t\log|G_t|<1.
			\label{eq:hamming-error}
		\end{equation}
		Write $\beta_t=\{B_{t,g}:g\in G_t\}$.  Every atom has positive measure,
		because the one-coordinate distribution is uniform on
		$G_t$.  By inner regularity, choose nonempty compact cores
		$C_{t,g}\subset B_{t,g}$ sufficiently large that, defining
		\[
		D_t:=X\setminus\bigcup_{g\in G_t}C_{t,g},
		\]
		we have
		\begin{equation}
			\mu(D_t)<\theta_t/2.
			\label{eq:bad-set-small}
		\end{equation}
		The sets $C_{t,g}$ are pairwise disjoint and their minimum mutual distance is positive, i.e.
		\begin{equation}
			\Delta_t
			:=
			\min_{g\neq g'}d(C_{t,g},C_{t,g'})>0.
			\label{eq:compact-core-gap}
		\end{equation}
		
		Let $R=(R_t)_{t\geq1}: X \to Y:= \prod_{t\geq 1} S_t$ and disintegrate $\mu$ over this Borel map,
		\[
		\mu=\int\eta_r\,d\nu(r), \quad \text{where}~\nu=R_* \mu.
		\]
		For $A\in\alpha_1^T$ and a Borel set $B \subset R(X)$,
		\eqref{eq:total-independence} gives
		\[
		\int_B\eta_r(A)\,d\nu(r)
		=
		\mu(A\cap R^{-1}(B))
		=
		\mu(A)\nu(B).
		\]
		Since $\alpha_1^T$ is countably generated, first applying this identity to a
		countable generating algebra and then using a monotone-class argument shows
		that, for $\nu$-a.e.  $r$,
		\begin{equation}
			\eta_r(A)=\mu(A)
			\quad(A\in\alpha_1^T),
			\quad
			\eta_r(R^{-1}(r))=1.
			\label{eq:fiber-properties}
		\end{equation}
		
		By Birkhoff's theorem and \eqref{eq:bad-set-small}, for each $t$ and
		$\mu$-a.e. $x$,
		\[
		\frac1n\sum_{i=0}^{n-1}\mathbf1_{D_t}(T^ix)\leq\theta_t
		\]
		for all sufficiently large $n$.  Let $Y_t$ be the set of points for which
		this eventual estimate holds.  Then $\mu(Y_t)=1$ for every $t \geq 1$, and hence
		$\mu(\bigcap_{t\geq 1} Y_t)=1$.  Disintegration shows that
		$\eta_r(\bigcap_{t\geq1}Y_t)=1$ for $\nu$-a.e. $r$.  Fix one $r$ for which
		this conclusion and both assertions in \eqref{eq:fiber-properties} hold.
		
		Define
		\[
		E_{t,N}
		:=
		\left\{
		x:
		\frac1n\sum_{i=0}^{n-1}\mathbf1_{D_t}(T^ix)
		\leq\theta_t
		\text{ for every }n\geq N
		\right\}.
		\]
		For the fixed $r$, the sets $E_{t,N}$ increase to $Y_t$ as $N\to\infty$.
		Choose $\varepsilon_t>0$ such that
		$\sum_{t\geq1}\varepsilon_t<1/8$, and then choose $N_t$ so that
		\[
		\eta_r(E_{t,N_t})>1-\varepsilon_t.
		\]
		Using \eqref{eq:fiber-properties} gives
		\[
		\eta_r\left(
		R^{-1}(r)\cap\bigcap_{t\geq1}E_{t,N_t}
		\right)
		>\frac78.
		\]
		By inner regularity of
		$\eta_r$, there exists a compact set $K \subset R^{-1}(r)\cap\bigcap_{t\geq1}E_{t,N_t}$ such that
		\begin{equation}
			\eta_r(K)>\frac34.
			\label{eq:K-positive}
		\end{equation}
		Thus, for every $x\in K$ and every $n\geq N_t$,
		\begin{equation}
			\#\{0\leq i<n:T^ix\in D_t\}
			\leq\theta_tn.
			\label{eq:uniform-bad-frequency}
		\end{equation}

		Next, we show that $K$ is precisely the set required in \cref{thm:infinite-main}. Let
		$\cA_n$ be the union of all atoms of $(\alpha_1)_0^{n-1}$ that meet
		$K$. Then $\cA_n\in\alpha_1^T$, and
		\eqref{eq:fiber-properties}--\eqref{eq:K-positive} give
		\begin{equation}
			\mu(\cA_n)=\eta_r(\cA_n)\geq\eta_r(K)>\frac34.
			\label{eq:A-large}
		\end{equation}
		Fix $\varepsilon\in(0,a)$, and let $V_{n,\varepsilon}$ be the union of the
		atoms $A\in(\alpha_1)_0^{n-1}$ satisfying
		$\mu(A)\leq e^{-n(a-\varepsilon)}$.  The
		Shannon--McMillan--Breiman theorem gives
		$\mu(V_{n,\varepsilon})\to1$.  Thus, for all sufficiently large $n$,
		\[
		\mu(\cA_n\cap V_{n,\varepsilon})
		>\frac12.
		\]
		Every atom occurring in this intersection has measure at most
		$e^{-n(a-\varepsilon)}$ and meets $K$.  Hence at least
		$\frac12e^{n(a-\varepsilon)}$ distinct names of length $n$ occur in $K$, so
		\[
		\Nname_{\alpha_1}(n,K)
		\geq\frac12e^{n(a-\varepsilon)}.
		\]
		Letting $\varepsilon\downarrow0$ yields
		\begin{equation}
			\overline h_{\alpha_1}(K)\geq a>0.
			\label{eq:K-positive-name-entropy}
		\end{equation}

		Fix
		$t$ and a compact set $L\subset K$.  Since $K\subset R^{-1}(r)$, for each
		$i\in\Z$ there is a fixed element $r_{t,i}\in G_t$ such that
		$\rho_{t,i}(x)=r_{t,i}$ for every $x\in K$.  Therefore
		\[
		A_{t,i}(x)=b_t(T^ix)-r_{t,i}
		\quad(x\in K).
		\]
		Translation by the fixed element $r_{t,i}$ is a bijection of $G_t$, so the
		list of key symbols
		\[
		\bigl(b_t(x),b_t(Tx),\ldots,b_t(T^{n-1}x)\bigr)
		\]
		determines, and is determined by, the list of blocks
		$\bigl(A_{t,0}(x),\ldots,A_{t,n-1}(x)\bigr)$.  Concatenating these $n$
		blocks gives exactly the $\alpha_{m_t}$-name of $x$ at times
		$0,1,\ldots,q_tn-1$.  Hence the two collections of names occurring in
		$L$ are in bijection, and
		\begin{equation}
			\Nname_{\beta_t}(n,L)
			=
			\Nname_{\alpha_{m_t}}(q_tn,L).
			\label{eq:exact-name-count}
		\end{equation}
		By \cref{lem:subsequence},
		\begin{equation}
			\overline h_{\beta_t}(L)
			=
			q_t\,\overline h_{\alpha_{m_t}}(L).
			\label{eq:block-entropy-scaling}
		\end{equation}
		Applying \cref{lem:hamming-bowen} with
		\eqref{eq:hamming-error}, \eqref{eq:compact-core-gap}, and
		\eqref{eq:uniform-bad-frequency}, we obtain
		\begin{equation}
			h(T,L)
			\geq
			\overline h_{\beta_t}(L)-e_t
			\geq
			q_t\,\overline h_{\alpha_{m_t}}(L)-1.
			\label{eq:entropy-amplification}
		\end{equation}
		
		When we take the indices $t$ with $m_t=1$, the corresponding $q_t$ are unbounded, so
		\eqref{eq:K-positive-name-entropy} and
		\eqref{eq:entropy-amplification} imply
		\begin{equation}
			h(T,K)=+\infty.
			\label{eq:K-infinite}
		\end{equation}
		Now let $L\subset K$ be compact with $h(T,L)>0$. By \cref{lem:mesh}, there
		is an $m$ such that $\overline h_{\alpha_m}(L)>0$. Keeping this $m$ fixed
		and taking the corresponding unbounded values of $q_t$ in
		\eqref{eq:entropy-amplification}, we get $h(T,L)=+\infty$. Therefore
		\[
		h(T,K)=+\infty,
		\quad 
		h(T,L)\in\{0,+\infty\}
		\]
		for every compact $L\subset K$. So $K$ is not lowerable. By
		\cref{def:lowerable}, $(X,T)$ is not hereditarily lowerable.
	\end{proof}
		
	\begin{corollary}
		\label{cor:conjecture}
		\Cref{conj:infinite} is true.
	\end{corollary}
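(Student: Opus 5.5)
This is immediate from \cref{thm:infinite-main}, so the plan is simply to match its conclusion against the wording of \cref{conj:infinite}. Let $(X,T)$ be a topological dynamical system that admits an ergodic invariant probability measure with infinite entropy, that is, some $\mu\in\mathcal M^e(X,T)$ with $h_\mu(T)=+\infty$. This is exactly the hypothesis of \cref{thm:infinite-main}. That theorem gives a compact set $K\subset X$ with $h(T,K)=+\infty$ and $h(T,L)\in\{0,+\infty\}$ for every compact $L\subset K$.

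Next I check that this $K$ violates \cref{def:lowerable}(ii). Choose any finite $h$ with $0<h<h(T,K)=+\infty$, for instance $h=1$. A compact set $K_h\subset K$ with $h(T,K_h)=h$ would have entropy equal to neither $0$ nor $+\infty$, which the dichotomy forbids. So $K$ is not lowerable, and therefore $(X,T)$ is not hereditarily lowerable. This is precisely the assertion of \cref{conj:infinite}.

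There is no real obstacle here, since all the work is already in \cref{thm:infinite-main}. The only point that needs care is the interpretation of the range $0\leq h\leq h(T,K)$ when $h(T,K)=+\infty$: it must include every finite positive value. That holds under the convention that entropies take values in $[0,+\infty]$, which is the one used throughout the paper.
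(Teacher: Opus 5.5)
Your proposal is correct and matches the paper. The corollary is read off directly from \cref{thm:infinite-main}: the proof of that theorem ends with the same observation you make, namely that the dichotomy $h(T,L)\in\{0,+\infty\}$ rules out every finite positive value such as $h=1$, so $K$ is not lowerable and $(X,T)$ is not hereditarily lowerable.
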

	
	\begin{remark}
		The above set $K$, constructed in \cref{thm:infinite-main}, has positive measure for each conditional measure $\eta_r$, but is typically $\mu$-null. This is no accident. Indeed, if a compact set $C$ has $\mu(C)>0$, then the relative entropy extraction in \cite[Proposition 4.3 and Lemma 4.1]{HYZ2014} allows one to realize any prescribed finite entropy value within $C$. Consequently, a compact set of positive $\mu$-measure cannot serve as a certificate for non-lowerability; the proof must confine all amplification relations to the same $\mu$-null conditional fibre.
	\end{remark}
	\begin{sloppypar}
		
	\end{sloppypar}
	
\end{document}